\newtheorem*{maintheorem*}{Main Theorem}
\newtheorem{theorem}{Theorem}[section]
\newtheorem*{theorem*}{Main Theorem}
\newtheorem{prop}[theorem]{Proposition}
\newtheorem{lemma}[theorem]{Lemma}
\newtheorem{cor}[theorem]{Corollary}
\theoremstyle{definition}
\newtheorem{definition}[theorem]{Definition}
\newtheorem{remark}[theorem]{Remark}
\newtheorem{example}[theorem]{Example}
\numberwithin{equation}{section}
\newcommand{\cc}{\mathbb{C}}
\newcommand{\nn}{\mathbb{N}}
\newcommand{\pp}{\mathbb{P}}
\newcommand{\qq}{\mathbb{Q}}
\newcommand{\rr}{\mathbb{R}}
\newcommand{\zz}{\mathbb{Z}}
\providecommand\ldb{\llbracket}
\providecommand\rdb{\rrbracket}
\keywords{join semilattice, factorization, factorizable semilattice, half-factoriality, length-factoriality, elasticity}
\subjclass[2020]{Primary: 11Y05, 20M13; Secondary: 06B25, 05C90}
\begin{document}
	
\mbox{}
\title{Factoriality inside Boolean lattices}

\author{Khalid Ajran}
\address{Department of Mathematics\\MIT\\Cambridge, MA 02139}
\email{kajran@mit.edu}

\author{Felix Gotti}
\address{Department of Mathematics\\MIT\\Cambridge, MA 02139}
\email{fgotti@mit.edu}

	
\begin{abstract}
	Given a join semilattice $S$ with a minimum $\hat{0}$, the quarks (also called atoms in order theory) are the elements that cover~$\hat{0}$, and for each $x \in S \setminus \{\hat{0}\}$ a factorization (into quarks) of $x$ is a minimal set of quarks whose join is~$x$. If every element $x \in S \setminus \{\hat{0}\}$ has a factorization, then $S$ is called factorizable. If for each $x \in S \setminus \{\hat{0}\}$, any two factorizations of $x$ have equal (resp., distinct) size, then we say that~$S$ is half-factorial (resp., length-factorial). Let $B_\nn$ be the Boolean lattice consisting of all finite subsets of~$\nn$ under intersections and unions. Here we study factorizations into quarks of join subsemilattices of $B_\nn$, focused on the notions of half-factoriality and length-factoriality. We also consider the unique factorization property, which is the most special and relevant type of half-factoriality, and the elasticity, which is an arithmetic statistic that measures the deviation from half-factoriality.
\end{abstract}

\maketitle

\section{Introduction}
\label{sec:intro}

In a commutative monoid $M$, a non-invertible element is called irreducible provided that it does not decompose as a product of two non-invertible elements, and then $M$ is called atomic provided that every non-invertible element of $M$ decomposes as a product of finitely many irreducibles. Such irreducible decompositions are called factorizations (into irreducibles). The first systematic studies of factorizations seem to go back to the sixties with the work of Carlitz~\cite{lC60}, Narkiewicz~\cite{wN66,wN67}, and Cohn~\cite{pC68} in the context of algebraic number theory and commutative ring theory. After that, there were further sporadic investigations of factorizations, including those by Skula~\cite{lS76}, Zaks~\cite{aZ76,aZ80}, Steffan~\cite{jlS86}, and Valenza~\cite{rV90}. In the early nineties, Anderson, Anderson, and Zafrullah~\cite{AAZ90} and Halter-Koch~\cite{fHK92} introduced the bounded and the finite factorization properties. Since then, a flurry of papers studying factorizations in a large variety of algebraic structures have systematically appeared in the literature, giving shape to what we call today factorization theory (see the current surveys~\cite{AG22,GZ20} and references therein).
\smallskip

When the commutative monoid is taken to be a join semilattice $S$ with a minimum, which must be the identity and only invertible element of $S$, the notion of a factorization is vacuous because~$S$ contains no irreducibles (as every element of $S$ is idempotent). However, a rich factorization theory in join semilattices, parallel to that of factorizations into irreducibles, is still possible if we let the role of irreducibles be played by the quarks. Following Tringali~\cite{sT22}, we say that a quark\footnote{Although quarks have been called atoms in order theory for quite a while, we have adopted the former term here because, in the context of factorization theory, the latter term is reserved for a special and well-studied type of irreducible elements.} of a join semilattice is an element covering the minimum. For the rest of this section, let $S$ be a join semilattice with a minimum, which we denote by~$\hat{0}$. Then we say that join semilattice $S$ is factorizable (into quarks) provided that every element in $S \setminus \{\hat{0}\}$ is the join of finitely many quarks. For $x \in S \setminus \{\hat{0}\}$, a factorization (into quarks) of $x$ in~$S$ is a minimal set of quarks whose join is~$x$. Clearly, $S$ is factorizable if and only if every element in $S \setminus \{\hat{0}\}$ has a factorization. In this paper, we mostly focus on the study of factorizations in the class consisting of factorizable join subsemilattices of the free semilattice on~$\nn$, the set of positive integers. Therefore we will be investigating factorizations in a highly non-cancellative setting: indeed, in a join semilattice with a minimum, the only cancellative element is the minimum (as every element is idempotent).
\smallskip

Having said that, it is worth emphasizing that factorizations (into irreducibles) have been well studied in various non-cancellative algebraic structures during the past few decades (for instance, see \cite{CFGO16} and references therein). Factorizations in commutative rings with zero-divisors were studied by Anderson and Valdes-Leon~\cite{AV96, AV97} in the nineties and more recently by Anderson and Chun~\cite{AC11,AC13}. In the same direction, Juett et al. have provided a further insight into the same area with their recent papers~\cite{AJ17,EJ21,JM22}. The arithmetic of factorizations of semigroups of ideals and modules was studied by Fan et al. in~\cite{FGKT17}. On the other hand, factorizations in power monoids have also been the subject of recent investigation. Given a commutative monoid $M$, the set consisting of all nonempty finite subsets of $M$ under the Minkowski sum is called the power monoid of~$M$. Motivated in part by arithmetic combinatorics, factorizations in the power monoid of the additive monoid of nonnegative integers were first studied by Fan and Tringali in~\cite{FT18}, where the term ``power monoid" was coined. The same were later studied in power monoids of cyclic groups by Antonious and Tringali~\cite{AT21} as well as in power monoids of numerical monoids by Bienvenu and Geroldinger~\cite{BG23}. Finally, factorizations in a more general setting have been studied by Cossu and Tringali in a series of recent papers (see~\cite{CT22} and references therein).
\smallskip

Let $S$ be a factorizable join semilattice with minimum $\hat{0}$. For each $x \in S \setminus \{ \hat{0}\}$, the size of a factorization of~$x$ is called a length of $x$, and the set consisting of all the possible lengths of~$x$ is denoted by $\mathsf{L}(x)$ and called the set of lengths of $x$. We say that $S$ is half-factorial (resp., length-factorial) if any two distinct factorizations of the same non-minimum element of~$S$ have the same length (resp., different lengths). Let $B_\nn$ be the free lattice on $\nn$. As pointed out earlier, our primary purpose here is to investigate factorizations inside join subsemilattices of $B_\nn$. We call such join subsemilattices Boolean sublattices. An effective traditional way to study the phenomenon of multiple factorizations (into irreducibles) is through the lens of sets of factorizations/lengths as, intuitively, the same phenomenon is more pronounced when the sizes of the sets of factorizations/lengths are larger. The purpose of this paper is twofold. On one hand, we collect fundamental evidence to conclude that factorizations in Boolean sublattices are rather nontrivial and quite interesting; we not only identify classes of half/length-factorial Boolean sublattices, but also show that there exist Boolean sublattices that are as far from being half-factorial as they could possibly be (we do this by considering elasticities). On the other hand, we compare and contrast some of the results we establish here with results already known in the more classical setting of factorizations into irreducibles.
\smallskip

The term ``half-factoriality" was coined by Zaks in~\cite{aZ76}. With initial motivation in algebraic number theory, the notion of half-factoriality has been systematically investigated in the setting of monoids and domains during the last six decades (see the classical works~\cite{lC60,jC97,lS76,aZ80} and the more recent papers~\cite{GZ19,PS20}). The unique factorization property is the most special case of half-factoriality, and the study of the former in the setting of monoids has been largely motivated by trying to establish results similar to the Fundamental Theorem of Arithmetic in more abstract algebraic structures. For instance, although Dedekind domains (specially, rings of integers) are not UFDs in general, their nonzero ideals factor uniquely into prime ideals. This important factorization property of Dedekind domains has inspired many authors to investigate further classes of commutative monoids satisfying the unique factorization property (see \cite{rJ71} and the more recent paper \cite{SH08}). We devote Section~\ref{sec:half-factoriality} to the study of half-factoriality in Boolean sublattices. In Subsection~\ref{subsec:factoriality}, we consider the special case of the unique factorization property: we find a sufficient condition for a Boolean sublattice to be a unique factorization semilattice and then we identify a special class of unique factorization Boolean sublattices. In Subsection~\ref{subsec:half-factoriality}, we establish two characterizations of half-factoriality in the class consisting of all Boolean sublattices whose quarks have size at most~$2$ (a similar characterization is given in Subsection~\ref{subsec:factoriality} for the unique factorization property).
\smallskip

In the classical theory of factorizations into irreducibles, one of the most important arithmetic statistics to measure the deviation of an algebraic structure from being half-factorial is the elasticity. A similar notion of elasticity can be defined in our setting of join semilattices by replacing factorizations into irreducibles by factorizations into quarks. For $x \in S \setminus \{\hat{0}\}$, we call $\sup \mathsf{L}(x)/\min \mathsf{L}(x)$ the elasticity of~$x$ and we denote it by $\rho(x)$. The elasticity of the whole semilattice $S$ is then defined as $\sup \{\rho(x) \mid x \in S \setminus \{ \hat{0}\} \}$. The notion of elasticity was first considered in the context of rings of algebraic integers and Dedekind domains by Valenza~\cite{rV90} and Steffan~\cite{jlS86}, respectively (the term ``elasticity" being coined by Valenza in~\cite{rV90}). The elasticity has been systematically studied ever since not only in integral domains, but also in the more general setting of cancellative commutative monoids: see the Anderson's survey~\cite{dfA97} for the most relevant advances on the elasticity in integral domains until 2000 and see the papers \cite{BC16,fG20,fK05,qZ19} (and references therein) for further advances on the elasticity in both commutative monoids and integral domains taking place during the past two decades. Even more recently, the elasticity has also been considered for general monoids (see~\cite{CT23}). In Section~\ref{sec:elasticity}, we establish a realization theorem for the elasticity of such join semilattices: we prove that if $\alpha$ is a real number that is at least~$1$, then there exists a factorizable Boolean sublattice whose elasticity is~$\alpha$ (we argue the same statement for $\alpha = \infty$). This result is parallel to~\cite[Theorem~3.2]{AA92} and~\cite[Proposition~3.5]{CHM06} and in sharp contrast to \cite[Theorems~5.7 and~5.8]{fG20}.
\smallskip

The notion of length-factoriality was introduced by Coykendall and Smith in~\cite{CS11} to characterize UFDs: they proved that an integral domain is a UFD if and only if its multiplicative monoid is length-factorial. Observe that the notion of length-factoriality somehow complements that of half-factoriality in the sense that a monoid has the unique factorization property if and only if it is half-factorial and length-factorial simultaneously. This justifies why length-factoriality was initially introduced in~\cite{CS11} under the term ``other-half-factoriality" (the term ``length-factoriality" was adopted in~\cite{CCGS21} recently). Length-factoriality was first investigated in the context of commutative monoids in \cite[Section~5]{fG20a}, \cite[Sections~3 and~5]{CCGS21}, and \cite[Section~5]{CG22}. Length-factoriality was also studied in~\cite{GZ21} in the setting of Krull monoids. In addition, length-factoriality has been recently investigated in~\cite[Section~5]{BVZ23} and \cite[Section~6]{GP23} in the setting of semidomains. In Section~\ref{sec:length-factoriality}, we study length-factoriality in Boolean sublattices. For Boolean sublattices whose quarks have size at most~$2$, we prove that the properties of being length-factorial and that of having unique factorization are equivalent. This result is parallel to the main result of~\cite{CS11} for integral domains. In the same section, we construct a factorizable join semilattice~$S$ that is length-factorial but contains one element whose set of lengths is not finite. This construction is not possible in the setting of cancellative commutative monoids (under the most classical notion of factorizations into irreducibles) as if a cancellative commutative monoid is length-factorial, then the set of factorizations of any non-invertible element must be finite~\cite[Proposition~3.1]{BVZ23}.

\bigskip
\section{Preliminary}
\label{sec:prelim}

In this section we briefly review some notation and terminology we shall be using throughout this paper. For undefined terms in posets and lattices, see \cite[Chapter~3]{rS12} by Stanley and for a comprehensive treatment in factorization theory, see the manuscript~\cite{GH06} by Geroldinger and Halter-Koch.

\smallskip
\subsection{General Notation}

As it is customary, $\zz$, $\qq$, $\rr$, and $\cc$ will denote the set of integers, rational numbers, real numbers, and complex numbers, respectively. We let $\nn$ and $\nn_0$ denote the set of positive and nonnegative integers, respectively. In addition, we let $\pp$ denote the set of primes. For $S \subseteq \rr$ and $r \in \rr$, we set $S_{\ge r} = \{s \in S \mid s \ge r\}$ and we use the notations $S_{> r}, S_{\le r}$, and $S_{< r}$ in a similar manner. For a set $S$, we let $2^S$ denote the power set of $S$; that is, $2^S = \{X \mid X \subseteq S\}$. When two sets~$S$ and~$T$ are disjoint, we often write $S \sqcup T$ instead of $S \cup T$ to emphasize that we are taking the union of disjoint sets.

\medskip
\subsection{Posets}

Let $P$ be a poset with its order relation denoted by $\le$. A subset~$I$ of $P$ is called an \emph{order ideal} if for all $x \in P$ and $y \in I$, the relation $x \le y$ implies that $x \in I$. It is clear that for each $y \in P$, the set $\Lambda_y := \{x \in P \mid x \le y\}$ is an order ideal. For $x,y \in P$, we set $[x,y] := \{ t \in P \mid x \le t \le y\}$ and call the subposet $[x,y]$ of~$P$ the \emph{interval} from $x$ to $y$ (clearly, $[x,y]$ is empty if and only if $x \not\leq y$). We say that~$P$ is \emph{locally finite} if $[x,y]$ is finite for all $x,y \in P$. When the poset~$P$ is the set of integers with the standard order, we write $\ldb x,y \rdb$ instead of $[x,y]$. For $x,y \in P$, we say that~$y$ \emph{covers} $x$ and write $x \lessdot y$ provided that $|[x,y]| = 2$. A \emph{chain} of $P$ is a finite sequence of elements $x_0, x_1, \dots, x_\ell \in P$ such that $x_0 < x_1 < \dots < x_\ell$. The \emph{length} of a chain $x_0, x_1, \dots, x_\ell$ is $\ell$, and we say that the chain is \emph{saturated} if $x_0 \lessdot x_1 \lessdot \cdots \lessdot x_\ell$. We say that~$P$ is \emph{graded} if there exists a \emph{rank function} on $P$, that is, a function $\rho \colon P \to \nn_0$ satisfying that $\rho(m) = 0$ for every minimal element $m \in P$ and $\rho(y) = \rho(x)+1$ for all $x,y \in P$ such that $x \lessdot y$. Observe that if a poset is graded, then it has a unique rank function. Furthermore, one can readily che\underline{}ck that the poset~$P$ is graded if and only if for all $x,y \in P$ the interval $[x,y]$ is finite and any two saturated chains from $x$ to $y$ have the same length.

\medskip
\subsection{Semilattices}

If $P$ contains a minimum, such a minimum is unique and we denote it by $\hat{0}$. If a least upper bound of a finite subset $X$ of $P$ exists, then it must be unique and it is called the \emph{join} of $X$ in $P$. In such a case, we let $\vee X$ denote the join of $X$. A poset containing a minimum and satisfying that each finite subset has a join is called a \emph{join semilattice}. The dual notion of a join semilattice is that of a meet semilattice: $P$ is called a \emph{meet semilattice} if it contains a maximum, which must be unique and is denoted by $\hat{1}$, and each finite subset~$X$ of $P$ has a meet (i.e., a greatest lower bound), which must be unique and is denoted by~$\wedge X$. For $x_1, \dots, x_n \in P$, if the join (resp., meet) of $\{x_1, \dots, x_n\}$ exists, then we denote it by $x_1 \vee \dots \vee x_n$ (resp., $x_1 \wedge \dots \wedge x_n$). A \emph{lattice} is a poset that is simultaneously a join and a meet semilattice. Since the notion of a meet semilattice does not play a fundamental role in the context of this paper, we will reserve the single term ``semilattice" to refer to a join semilattice. One can readily verify that every locally finite semilattice~$S$ is a lattice, where $\wedge X = \vee \{s \in S \mid s \le x \text{ for all } x \in X\}$ for any finite subset $X$ of $S$. For this reason, all the semilattices we consider in this paper, except the one constructed in Example~\ref{ex:LFL not FFL}, are indeed lattices. 
Let $S$ be a semilattice. For a subset~$X$ of $S$, the semilattice \emph{generated} by $X$, which we denote here by $\langle X \rangle$, is the smallest subsemilattice of $S$ containing the set $\{\hat{0}\} \cup X$. If $x_1, \dots, x_n \in S$, then we denote the subsemilattice of $S$ generated by $\{x_1, \dots, x_n\}$ simply as $\langle x_1, \dots, x_n \rangle$. An element $a \in S$ is called a \emph{quark} provided that $\hat{0} \lessdot a$. We let $\mathcal{A}(S)$ denote the set of quarks of~$S$. We say that~$S$ is \emph{factorizable into quarks} or simply \emph{factorizable} if it is generated by its set of quarks; that is, $S = \langle \mathcal{A}(S) \rangle$. A lattice $S$ is \emph{semimodular} if it is graded and its rank function $\rho$ satisfies the following property: $\rho(x) + \rho(y) \ge \rho(x \wedge y) + \rho(x \vee y)$ for all $x,y \in S$. Factorizable semimodular lattices are called \emph{geometric lattices}.

\medskip
\subsection{Boolean Sublattices}

Let $Y$ be a set. The collection $B_Y$ consisting of all finite subsets of $Y$ is a poset under inclusion of sets. The poset $B_Y$ is a lattice with joins and meets given by unions and intersections, respectively. The lattice $B_Y$ is often called the \emph{Boolean lattice} on $Y$. It is clear that $B_Y$ is a geometric lattice whose quarks are the singletons. The subsemilattices of $B_\nn$ are the primary focus of this paper, and we call them \emph{Boolean sublattices}. Since $B_\nn$ is locally finite, every Boolean sublattice is a locally finite semilattice and, therefore, a lattice (although meets may not be given by intersections). If \emph{all} the elements of a Boolean sublattice are sets comprising only $1$-digit positive integers (i.e., integers in the interval $\ldb 1,9 \rdb$), then in order to simplify notation we omit the braces and the commas in the set representation of its elements: for instance, instead of $\langle \{1,2\}, \{2,3\}, \{1,3\} \rangle$, we write $\langle 12, 23, 13 \rangle$. Unlike the lattice $B_\nn$, Boolean sublattices are not necessarily graded or factorizables. For instance, one can easily see that the Boolean sublattice $\langle \{1\}, \{2\}, \{2,3\}, \ldb 1,n \rdb \mid n \ge 3 \rangle$ is neither graded nor factorizable. Given a collection $A$ of finite subsets of~$\nn$, the Boolean sublattice $\langle A \rangle$ generated by $A$ is a lattice and, if there are no inclusion relations between any two members of $A$, then it is clear that $\langle A \rangle$ is a factorizable lattice with set of quarks $A$.

\medskip
\subsection{Graphs}

Throughout this paper, graphs are tacitly assumed to be undirected and simple (i.e., without loops and multiple edges), but they are allowed to have infinitely many vertices and edges. Let~$G$ be a graph. We let $V(G)$ and $E(G)$ denote the set of vertices and edges of $G$, respectively. We say that a sequence $\{v_0, v_1\}, \{v_1, v_2\}, \dots, \{v_{\ell-1}, v_\ell \} \in E(G)$ is a \emph{path} if the vertices $v_0, \dots, v_\ell$ are distinct. On the other hand, a sequence $\{v_0, v_1\}, \{v_1, v_2\}, \dots, \{v_{\ell-1}, v_\ell \} \in E(G)$ is a \emph{cycle} if $\{v_0, v_1\}, \{v_1, v_2\}, \dots, \{v_{\ell-2}, v_{\ell-1} \}$ is a path and $v_\ell = v_0$. The number of edges in a path/cycle is called its \emph{length}. If a graph is connected and all its edges form a path (resp., a cycle), then it is called a \emph{path graph} (resp., \emph{cycle graph}) and is denoted by $P_n$ (resp., $C_n$), where $n$ is the number of vertices. The \emph{diameter} of a tree $T$ is the length of a maximum-length path in~$T$. The \emph{star graph} on $n+1$ vertices, denoted by $S_n$, is the complete bipartite graph $K_{n,1}$, that is, a tree with $n+1$ vertices and diameter at most~$2$. A subset $E$ of $E(G)$ is called an \emph{edge covering} of the graph $G$ provided that every vertex of $G$ is one of the two incidence vertices of an edge in $E$.

\medskip
\subsection{Factorizations}

For a set $X$, the \emph{free semilattice} on $X$, denoted here by $F(X)$, is the collection of all nonempty finite subsets of~$X$ ordered by inclusion. Let $S$ be a semilattice. The \emph{factorization semilattice} of $S$ is the free semilattice $F(\mathcal{A}(S))$ on $\mathcal{A}(S)$, and we denote it by $\mathsf{Z}(S)$. Since $\mathsf{Z}(S)$ is free, there exists a unique semilattice homomorphism $\pi \colon \mathsf{Z}(S) \to S$ satisfying that $\pi(a) = a$ for all $a \in \mathcal{A}(S)$ (observe that $S$ is factorizable if and only if $\pi$ is surjective). For $a_1, \dots, a_\ell \in \mathcal{A}(S)$, we say that $z :=\{a_1, \dots, a_\ell\}$ is a \emph{factorization into quarks} or simply a \emph{factorization} of $a_1 \vee \dots \vee a_\ell$ provided that the formal join $a_1 \vee \dots \vee a_\ell$ is irredundant; that is, if $\vee \{a_j \mid j \in J\} < a_1 \vee \dots \vee a_\ell$ in $S$ whenever $J \subsetneq \ldb 1,\ell \rdb$. For each $z \in \mathsf{Z}$, we call $|z|$ the \emph{length} of the factorization~$z$. For each $x \in S$, we let $\mathsf{Z}(x)$ denote the set of all factorizations of~$x$, and we set
\[
	\mathsf{L}(x) := \{|z| : z \in \mathsf{Z}(x)\}.
\]
We say that $S$ is a \emph{finite factorization semilattice} (or an \emph{FFS}) if $|\mathsf{Z}(x)| < \infty$ for all $x \in S$. When $\mathsf{Z}(x)$ is a singleton for every $x \in S \setminus \{\hat{0}\}$, we say that $S$ is a \emph{unique factorization semilattice} (or a \emph{UFS}). It follows from the definitions that every UFS is an FFS. When $\mathsf{L}(x)$ is a singleton for all $x \in S \setminus \{\hat{0}\}$, we say that $S$ is a \emph{half-factorial semilattice} (or an \emph{HFS}). On the other hand, we say that~$S$ is a \emph{length-factorial semilattice} (or an \emph{LFS}) provided that any two distinct factorizations of the same non-minimum element of~$S$ have different lengths. It follows from the definitions that a semilattice is a UFS if and only if it is both an HFS and an LFS.
\smallskip

Let $S$ be a factorizable Boolean sublattice. For any $X \in S$, the set $\mathcal{A}(S) \cap \Lambda_X$ is finite; indeed, $|\mathcal{A}(S) \cap \Lambda_X| \le 2^{|X|}$, which implies that $\mathsf{Z}(X)$ is finite. Therefore being factorizable and being an FFS are equivalent conditions in the class consisting of Boolean sublattices. We record this observation for future reference.

\begin{remark} \label{rem:atomic = FF in Boolean sublattices}
	A Boolean sublattice is factorizable if and only if it is an FFS.
\end{remark}
\smallskip

The corresponding statement for the more general class consisting of all semilattices does not hold. In Section~\ref{sec:length-factoriality}, we will construct a factorizable join semilattice that is not an FFS.

\bigskip
\section{Two Related Graphs}
\label{sec:half-factoriality}

In this section, we introduce two graphs that will help us investigate factorizations into quarks inside Boolean sublattices.

\begin{definition}
	The \emph{quarkic graph} of a Boolean sublattice $S$, denoted by $\mathcal{G}(S)$, is the graph whose set of vertices is $\mathcal{A}(S)$ and that has an edge between distinct quarks $A$ and $B$ whenever $A \cap B$ is nonempty.
\end{definition}

We say that a quark $A$ of a Boolean sublattice $S$ is an \emph{isolated quark} if $A$ is disjoint from any other quark of $S$. We let $\mathcal{A}_I(S)$ denote the set consisting of all the isolated quarks of $S$.

\begin{example}
	Consider the Boolean sublattice $S := \langle 12, 13, 23, 45, 46 \rangle$. A fragment of the Hasse diagram of $S$ is illustrated in Figure~\ref{fig:atomic graph}. It is clear that $S$ is factorizable with $\mathcal{A}(S) = \{ 12, 13, 23, 45, 46 \}$. Observe that the quarkic graph $\mathcal{G}(S)$ of $S$ consists of two connected components: the $3$-cycle graph $G_1$ on the set of vertices $\{12, 13, 23\}$ and the path graph $G_2$ on the set of vertices $\{45,46\}$. Therefore $S$ contains no isolated quarks. In Figure~\ref{fig:atomic graph}, the Boolean sublattices $\langle V(G_1) \rangle$ and $\langle V(G_2) \rangle$ are highlighted in red and green, respectively.
	\begin{figure}[h]
		\includegraphics[width=8cm]{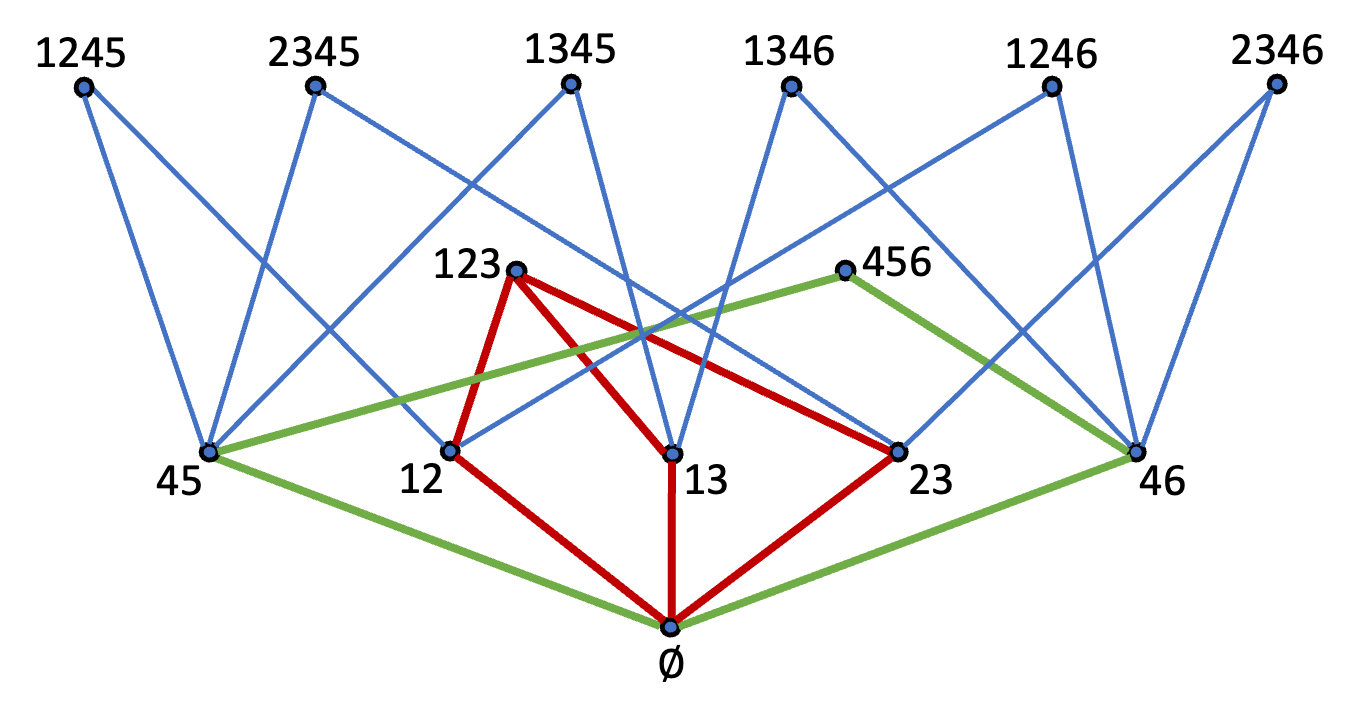}
		\caption{A fragment of the Hasse diagram of the Boolean sublattice generated by the set $\{ 123, 234, 345, 456 \}$ (including only the sets of size at most $4$).}
		\label{fig:atomic graph}
	\end{figure}
\end{example}

\begin{lemma} \label{lem:decomposition lemma}
	For a factorizable Boolean sublattice $S$, the following statements hold.
	\begin{enumerate}
		\item If $C$ is a connected component of $\mathcal{G}(S)$, then the Boolean sublattice $\langle V(C) \rangle$ is factorizable and $\mathcal{A}(\langle V(C) \rangle) = \mathcal{A}(S) \cap \langle V(C) \rangle$.
		\smallskip
		
		\item For each nonempty set $X \in S$, there exist unique connected components $C_1, \dots, C_k$ of $\mathcal{G}(S)$ and unique nonempty sets $X_1 \in \langle V(C_1) \rangle, \dots, X_k \in \langle V(C_k) \rangle$ such that
		\begin{equation*} \label{eq:canonical decomposition}
			X = X_1 \sqcup \dots \sqcup X_k.
		\end{equation*}
		
		\item With notation as in part~(2), the function $\mathsf{Z}_S(X) \to \mathsf{Z}_{\langle V(C_1) \rangle}(X_1) \times \dots \times \mathsf{Z}_{\langle V(C_k) \rangle}(X_k)$ given by the assignment $z \mapsto (z_1, \dots, z_k)$, where $z_i := \{A \in z \mid A \le X_i\}$, is a bijection.
		\smallskip
		
		\item For all $X \in S$ and $A \in \mathcal{A}_I(S)$ with $A \not\le X$,
		\[
			\mathsf{Z}(X \vee A) = \{z \cup \{A\} \mid z \in \mathsf{Z}(X) \}.
		\]
	\end{enumerate}
\end{lemma}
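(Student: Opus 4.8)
The whole lemma rests on one elementary observation: by the definition of $\mathcal{G}(S)$, two quarks in distinct connected components are disjoint as subsets of $\nn$, so the union of the quarks lying in one component is disjoint from the union of those lying in another. I would also record at the outset that, since $S$ is factorizable, every nonempty $X \in S$ equals the union of the finitely many quarks $A \in \mathcal{A}(S)$ with $A \le X$. For part~(1), distinct quarks are incomparable, so the generating set $V(C)$ has no inclusion relations among its members; hence, as observed earlier for such generating sets, $\langle V(C) \rangle$ is factorizable with $\mathcal{A}(\langle V(C) \rangle) = V(C)$. To upgrade this to $\mathcal{A}(S) \cap \langle V(C) \rangle = V(C)$, I would take a quark $A$ of $S$ lying in $\langle V(C) \rangle$, write $A$ as a union of members of $V(C)$, and use that $A$ covers $\hat{0}$ in $S$ to force this union to have a single term, whence $A \in V(C)$.

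For part~(2), existence comes from taking any factorization of $X$, grouping its quarks according to the component they belong to, and letting $X_i$ be the union over the $i$-th participating component; the opening observation then yields the disjoint decomposition $X = X_1 \sqcup \dots \sqcup X_k$ with $X_i \in \langle V(C_i) \rangle$. Uniqueness follows because the decomposition is forced: a component contributes a (necessarily nonempty) part precisely when it has a quark below $X$, and in any valid decomposition every quark $A \in V(C_i)$ with $A \le X$ must lie in $X_i$, since $A$ is disjoint from $X_j$ for all $j \ne i$. Thus $X_i = \bigcup \{A \in V(C_i) \mid A \le X\}$ depends only on $X$.

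For part~(3), I would first verify that the map is well defined, i.e.\ that each $z_i$ is a factorization of $X_i$ in $\langle V(C_i) \rangle$: the containment $z_i \subseteq V(C_i)$ and the identity $\bigcup z_i = X_i$ both follow from disjointness, while irredundancy of $z_i$ is inherited from that of $z$, because a quark that could be dropped from $z_i$ could also be dropped from $z$ (it is disjoint from the quarks of $z$ sitting in the other components). Injectivity is immediate, since $z = z_1 \sqcup \dots \sqcup z_k$ recovers $z$ from its image; surjectivity follows by gluing, as given factorizations $w_i$ of the $X_i$, their union is a factorization of $X$ by the same disjointness-based irredundancy check and maps to $(w_1, \dots, w_k)$.

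Finally, part~(4) is the special case in which one component is the singleton $\{A\}$: an isolated quark forms its own component, and $A \not\le X$ forces $A \cap X = \emptyset$, because every quark below $X$ is distinct from $A$ and hence disjoint from it. So $X \vee A = X \sqcup A$ is the canonical decomposition of part~(2) with one extra part $A$, whose only factorization is $\{A\}$, and part~(3) then identifies $\mathsf{Z}(X \vee A)$ with $\mathsf{Z}(X) \times \{\{A\}\}$, which is exactly the displayed formula. (Alternatively, one argues directly that every factorization of $X \vee A$ must contain $A$, since only $A$ covers the elements of $A$, and that deleting $A$ returns a factorization of $X$.) The one point demanding care throughout is the irredundancy bookkeeping in parts~(2) and~(3): one must repeatedly convert the global disjointness of the component-unions into the local statement that dropping a quark in one component cannot be compensated by quarks in another. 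Once the opening observation is firmly in place, however, each such verification is routine.
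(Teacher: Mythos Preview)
Your proposal is correct and follows essentially the same route as the paper: the key observation that quarks in distinct components are disjoint, the definition $X_i = \bigcup\{A \in V(C_i) \mid A \le X\}$, the bijection in part~(3) with inverse given by disjoint union, and the derivation of part~(4) as a special case of part~(3). Your treatment of part~(1) is organized a bit differently---you first invoke the preliminary fact that a generating set with no inclusion relations yields $\mathcal{A}(\langle V(C)\rangle)=V(C)$ and then separately verify $\mathcal{A}(S)\cap\langle V(C)\rangle=V(C)$---but this amounts to the same content as the paper's argument.
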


\begin{proof}
	(1) Let $C$ be a connected component of $\mathcal{G}(S)$. As $\langle V(C) \rangle$ is a subposet of $S$ containing the minimum of $S$, the inclusion $\mathcal{A}(S) \cap \langle V(C) \rangle \subseteq \mathcal{A}(\langle V(C) \rangle)$ holds. On the other hand, it is clear that $A \not\leq A'$ for any $A \in \mathcal{A}(S) \setminus V(C)$ and $A' \in V(C)$, which implies that $\mathcal{A}(\langle V(C) \rangle) \subseteq \mathcal{A}(S) \cap \langle V(C) \rangle$. Hence $\mathcal{A}(\langle V(C) \rangle) = \mathcal{A}(S) \cap \langle V(C) \rangle$. Verifying that $\langle V(C) \rangle$ is factorizable amounts to observing that, for each nonempty set $X \in \langle V(C) \rangle$, the fact that $A \not\leq X$ for any $A \in \mathcal{A}(S) \setminus V(C)$ guarantees that each factorization of $X$ in~$S$ is also a factorization of $X$ in $\langle V(C) \rangle$.
	\smallskip
	
	(2) Fix a nonempty set $X \in S$. Because $X$ is a finite set, there are only finitely many connected components $C$ of $\mathcal{G}(S)$ such that $X$ intersects some quarks of $C$. Let $C_1, \dots, C_k$ be such connected components. For each $i \in \ldb 1,k \rdb$, set $X_i = \bigcup \{A \in V(C_i) \mid A \subseteq X\}$. 
	It is clear that each $X_i$ is nonempty. Also, $X = X_1 \sqcup \dots \sqcup X_k$ because quarks of $S$ in distinct connected components of $\mathcal{G}(S)$ are disjoint. The uniqueness of the decomposition follows immediately.
	\smallskip
	
	(3) Let $\varphi_X \colon z \mapsto (z_1, \dots, z_k)$ be such a function. Fix a factorization $z \in \mathsf{Z}_S(X)$, and observe that for each $i \in \ldb 1,k \rdb$, the equality $\vee \{A \in z \mid A \le X_i\} = X_i$. This, in tandem with the irredundance of~$z$, ensures that $z_i \in \mathsf{Z}_{\langle V(C_i) \rangle}(X_i)$. Thus, $\varphi_X$ is well defined. On the other hand, one can readily check that the function $\mathsf{Z}_{\langle V(C_1) \rangle}(X_1) \times \dots \times \mathsf{Z}_{\langle V(C_k) \rangle}(X_k) \to \mathsf{Z}_S(X)$ given by $(z_1, \dots, z_k) \mapsto z_1 \sqcup \dots \sqcup z_k$ is the inverse of $\varphi_X$. Hence $\varphi_X$ is a bijection.
	\smallskip
	
	(4) This is an immediate consequence of part~(3).
\end{proof}

\begin{remark}
	A Boolean sublattice $S$ may not be factorizable even when $\langle V(C) \rangle$ is factorizable for each connected component~$C$ of $\mathcal{G}(S)$; for instance, although the Boolean sublattice $S := \{\emptyset, 1, 12\}$ is not factorizable, it has only one connected component, namely $C_1$ with $V(C_1) = \{1\}$ and $E(C_1) = \emptyset$, and the Boolean sublattice $\langle V(C_1) \rangle = \{\emptyset, 1\}$ is clearly factorizable.
\end{remark}

Boolean sublattices whose quarks have size at most $2$ play an important role in this paper. There is a natural way to construct a graph from any factorizable Boolean sublattice whose quarks have size at most~$2$.

\begin{definition}
	Let $S$ be a factorizable Boolean sublattice whose quarks have size at most~$2$. The \emph{pairing graph} of $S$, denoted by $\mathcal{G}_p(S)$, is the graph whose set of vertices is $\nn$ and having an edge between distinct vertices $a$ and $b$ whenever $\{a,b\} \in \mathcal{A}(S)$.
\end{definition}

\bigskip
\section{The Unique Factorization Property}
\label{subsec:factoriality}

The main purpose of this section is to study the unique factorization property in Boolean sublattices. As the following proposition indicates, the factoriality of a factorizable Boolean sublattice determines and is fully determined by that of the Boolean sublattices corresponding to the connected components of its quarkic graph.

\begin{prop} \label{prop:factoriality and connected components}
	Let $S$ be a factorizable Boolean sublattice, and let $\mathcal{C}$ be the set consisting of all the connected components of $\mathcal{G}(S)$. Then $S$ is a UFS if and only if $\langle V(C) \rangle$ is a UFS for all $C \in \mathcal{C}$.
\end{prop}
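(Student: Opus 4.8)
The plan is to reduce everything to the bijection furnished by Lemma~\ref{lem:decomposition lemma}(3), which identifies the set of factorizations of an element of $S$ with a product of sets of factorizations coming from the individual connected components of $\mathcal{G}(S)$. Recall that $S$ being a UFS means precisely that $\mathsf{Z}(X)$ is a singleton for every nonempty $X \in S$, and that a product of sets is a singleton if and only if every factor is a singleton; this elementary observation drives both implications.

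For the backward implication, I would assume that $\langle V(C) \rangle$ is a UFS for every $C \in \mathcal{C}$ and fix an arbitrary nonempty $X \in S$. Applying Lemma~\ref{lem:decomposition lemma}(2) produces the canonical decomposition $X = X_1 \sqcup \dots \sqcup X_k$ with each $X_i \in \langle V(C_i) \rangle$ nonempty, and then Lemma~\ref{lem:decomposition lemma}(3) gives a bijection $\mathsf{Z}_S(X) \to \mathsf{Z}_{\langle V(C_1) \rangle}(X_1) \times \dots \times \mathsf{Z}_{\langle V(C_k) \rangle}(X_k)$. By hypothesis each factor on the right is a singleton, so the product is a singleton, whence $\mathsf{Z}_S(X)$ is a singleton. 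As $X$ was arbitrary, $S$ is a UFS.

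For the forward implication, I would assume $S$ is a UFS, fix $C \in \mathcal{C}$, and fix a nonempty $X \in \langle V(C) \rangle$, with the goal of showing that $\mathsf{Z}_{\langle V(C) \rangle}(X)$ is a singleton. The key point—and essentially the only place where a short verification is needed—is that the canonical $S$-decomposition of $X$ from Lemma~\ref{lem:decomposition lemma}(2) consists of the single block $C$ with $X_1 = X$. To see this, write $X$ as a union of quarks lying in $V(C)$; then any quark $B \in \mathcal{A}(S)$ with $B \subseteq X$ must intersect one of these $C$-quarks, since $B$ is nonempty and contained in their union. Hence $B$ is adjacent to a vertex of $C$ in $\mathcal{G}(S)$, so $B \in V(C)$, and no quark from a different component can lie below $X$. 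With the decomposition of $X$ in $S$ thus reduced to a single block, Lemma~\ref{lem:decomposition lemma}(3) collapses to a bijection $\mathsf{Z}_S(X) \to \mathsf{Z}_{\langle V(C) \rangle}(X)$; since $\mathsf{Z}_S(X)$ is a singleton by hypothesis, so is $\mathsf{Z}_{\langle V(C) \rangle}(X)$. This proves that $\langle V(C) \rangle$ is a UFS.

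The main (and essentially the only) obstacle is the verification in the forward direction that an element of $\langle V(C) \rangle$ remains ``supported'' entirely within the component $C$ when viewed inside $S$, so that its factorizations in $S$ and in $\langle V(C) \rangle$ coincide; everything else is a direct transfer of the singleton condition through the product bijection of the decomposition lemma.
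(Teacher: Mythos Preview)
Your proof is correct, and for the backward implication it matches the paper's argument exactly. For the forward implication you take a slightly different route: the paper simply observes that $\mathcal{A}(\langle V(C)\rangle)\subseteq\mathcal{A}(S)$ forces $\mathsf{Z}_{\langle V(C)\rangle}(X)\subseteq\mathsf{Z}_S(X)$, giving at most one factorization, and then invokes Lemma~\ref{lem:decomposition lemma}(1) for the existence of at least one. You instead verify that the canonical decomposition of $X\in\langle V(C)\rangle$ collapses to the single block $C$, so that Lemma~\ref{lem:decomposition lemma}(3) yields an actual bijection $\mathsf{Z}_S(X)\to\mathsf{Z}_{\langle V(C)\rangle}(X)$. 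Your argument establishes a bit more (equality of the two factorization sets rather than mere inclusion), at the cost of the extra verification; the paper's inclusion argument is shorter and suffices for the UFS conclusion.
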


\begin{proof}
	For the direct implication, suppose that $S$ is a UFS. Let $C$ be a connected component of $\mathcal{G}(S)$. For each nonempty set $X \in \langle V(C) \rangle$, the inclusion $\mathcal{A}(\langle V(C) \rangle) \subseteq \mathcal{A}(S) \cap \langle V(C) \rangle$ guarantees that $\mathsf{Z}_{\langle V(C) \rangle}(X) \subseteq \mathsf{Z}_S(X)$, and so $X$ has at most one factorization in $\langle V(C) \rangle$. Since $\langle V(C) \rangle$ is factorizable by part~(1) of Lemma~\ref{lem:decomposition lemma}, it must be a UFS.
	
	Conversely, suppose that $\langle V(C) \rangle$ is a UFS for all $C \in \mathcal{C}$. Fix a nonempty set $X \in S$. By part~(2) of Lemma~\ref{lem:decomposition lemma}, we can uniquely decompose $X$ as $X = X_1 \sqcup \dots \sqcup X_k$ with $X_i \subseteq \langle V(C_i) \rangle$ for every $i \in \ldb 1,k \rdb$, where $C_1, \dots, C_k$ are distinct connected components of $\mathcal{G}(S)$. Now it follows from part~(3) of Lemma~\ref{lem:decomposition lemma} that $|\mathsf{Z}_S(X)| = |\mathsf{Z}_{\langle V(C_1) \rangle}(X_1) \times \dots \times \mathsf{Z}_{\langle V(C_k) \rangle}(X_k)|$. As $\langle V(C_1) \rangle, \dots, \langle V(C_k) \rangle$ are UFSs, $\mathsf{Z}_S(X)$ must be a singleton. Then we conclude that $S$ is a UFS.
\end{proof}

The following corollary, which we will use later, is an immediate consequence of Proposition~\ref{prop:factoriality and connected components}

\begin{cor} \label{cor:isolated atoms UFL}
	A Boolean sublattice $S$ is a UFS if and only if $\langle \mathcal{A}(S) \setminus \mathcal{A}_I(S) \rangle$ is a UFS.
\end{cor}

For a Boolean sublattice $S$, we say that a quark $A$ of $S$ is an \emph{excess quark} if for each $j \in A$ there exists $B \in \mathcal{A}(S) \setminus \{A\}$ such that $j \in B$; that is, $A$ is contained in the union of the remaining quarks. We can use excess quarks to provide a sufficient condition for a Boolean sublattice to be a UFS.

\begin{prop} \label{prop:UF sufficient condition}
	Let $S$ be a Boolean sublattice. If the excess quarks of $S$ are pairwise disjoint, then~$S$ is a UFS.
\end{prop}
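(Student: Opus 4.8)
The plan is to fix an arbitrary nonempty $X \in S$ and show that $\mathsf{Z}(X)$ is a singleton; the entire content is uniqueness, since being a UFS already presupposes that $S$ is factorizable, which I take for granted (it is what guarantees $\mathsf{Z}(X) \neq \emptyset$ and, in my counterexample-free reading, is the intended standing hypothesis, as in the surrounding results). The one structural fact I will lean on is that every quark occurring in a factorization $z$ of $X$ is contained in $\bigcup z = X$; so I may split the quarks contained in $X$ into the non-excess ones and the excess ones and determine each group's contribution to $z$ separately, showing that both are forced.

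For the non-excess quarks, suppose $A \subseteq X$ is a quark that is not excess. By definition there is an element $j \in A$ lying in no quark of $S$ other than $A$. Since $j \in X = \bigcup z$ for any factorization $z$ of $X$, some quark of $z$ must contain $j$, and $A$ is the only candidate, so $A \in z$. Hence every factorization of $X$ contains the set $N_X$ of all non-excess quarks contained in $X$; and since the quarks of a factorization are contained in $X$, the non-excess quarks of any factorization are exactly $N_X$.

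It remains to pin down the excess quarks, and this is where the hypothesis enters. Writing $R := X \setminus \bigcup N_X$ and decomposing an arbitrary factorization as $z = N_X \sqcup z_e$ with $z_e$ consisting of excess quarks, I claim $z_e = \{A \in \mathcal{A}(S) : A \subseteq X,\ A \text{ excess},\ A \cap R \neq \emptyset\}$. For ``$\subseteq$'', irredundance of $z$ supplies, for each $A \in z_e$, an element $j \in A$ not covered by $z \setminus \{A\} \supseteq N_X$, whence $j \in A \cap R$. For ``$\supseteq$'', if $A \subseteq X$ is excess with some $j \in A \cap R$, then $j$ is covered by some quark $A' \in z$; this $A'$ cannot be non-excess (otherwise $A' \in N_X$ and $j \in \bigcup N_X$), so $A'$ is excess, and since excess quarks are pairwise disjoint while $j \in A \cap A'$, I conclude $A = A' \in z$. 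This reverse inclusion is the crux and the only place the pairwise-disjointness hypothesis is used: without it, the element $j$ could be covered by several distinct excess quarks and $z_e$ would no longer be determined by $X$. Since both $N_X$ and the displayed description of $z_e$ depend only on $X$, every factorization of $X$ is the single set $N_X \cup z_e$; thus $|\mathsf{Z}(X)| \le 1$, and factorizability gives $|\mathsf{Z}(X)| = 1$. As $X$ was arbitrary, $S$ is a UFS.
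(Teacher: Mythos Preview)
Your proof is correct, and it takes a genuinely different route from the paper's. The paper argues by contradiction: given two distinct factorizations $\alpha$ and $\alpha'$ of the same element, it splits them as $\alpha=\beta\sqcup\gamma$ and $\alpha'=\beta'\sqcup\gamma$ with $\beta,\beta'$ disjoint, shows that $(\vee\beta)\setminus(\vee\gamma)=(\vee\beta')\setminus(\vee\gamma)$ is nonempty, proves that every quark in $\beta\sqcup\beta'$ is excess, and then observes that any element of $(\vee\beta)\setminus(\vee\gamma)$ lies in two distinct excess quarks (one from $\beta$, one from $\beta'$), contradicting the hypothesis. By contrast, you give a direct description of the unique factorization of each $X$: first the forced part $N_X$ consisting of all non-excess quarks contained in $X$ (each has a private element, so it must appear), and then the excess part, which you pin down as exactly the excess quarks $A\subseteq X$ meeting $R=X\setminus\bigcup N_X$. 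Your argument is constructive and yields an explicit formula for the unique factorization, which the contradiction argument does not; the paper's approach, on the other hand, avoids having to first isolate the non-excess quarks and works symmetrically with two hypothetical factorizations. Both use the pairwise-disjointness hypothesis at exactly one point (yours in the ``$\supseteq$'' direction of the description of $z_e$, the paper's at the very end to produce the contradiction), and your remark that factorizability is an implicit standing hypothesis matches how the paper's own proof proceeds.
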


\begin{proof}
	Assume that no two distinct excess quarks of $S$ intersect. Suppose, towards a contradiction, that a nonempty element of $S$ has two different factorizations, namely, $\alpha$ and $\alpha'$. Then we can write $\alpha = \beta \sqcup \gamma$ and $\alpha' = \beta' \sqcup \gamma$ for factorizations $\beta, \beta', \gamma \in \mathsf{Z}(S)$ such that $\beta$ and $\beta'$ are disjoint. The fact that $\alpha$ and $\alpha'$ are distinct factorizations of the same element guarantees that both $\beta$ and $\beta'$ are nonempty ($\gamma$ may be empty). Therefore, as both~$\alpha$ and $\alpha'$ are factorizations in $S$, both sets $(\vee \beta) \setminus (\vee \gamma)$ and $(\vee \beta') \setminus (\vee \gamma)$ are nonempty. We claim that $(\vee \beta) \setminus (\vee \gamma) = (\vee \beta') \setminus (\vee \gamma)$. To argue this, take $x \in (\vee \beta) \setminus (\vee \gamma)$. Then $x \in \vee \beta \subseteq \vee (\beta \sqcup \gamma) = \vee (\beta' \sqcup \gamma)$, and so there exists $A \in \beta' \sqcup \gamma$ such that $x \in A$. As $x \notin \vee \gamma$, we see that $A \in \beta'$ and so $x \in \vee \beta'$. Hence $x \in (\vee \beta') \setminus (\vee \gamma)$. As a consequence, the inclusion $(\vee \beta) \setminus (\vee \gamma) \subseteq (\vee \beta') \setminus (\vee \gamma)$ holds. The reverse inclusion can be argued similarly. 
	
	We proceed to show that every quark in $\beta \sqcup \beta'$ is an excess quark. Take $A \in \beta \cup \beta'$. Assume first that $A \in \beta$. Fix $x \in A$. If $x \in \vee \gamma$, then there exists a quark in $\gamma$ containing $x$. Suppose, on the other hand, that $x \notin \vee \gamma$. Then $x \in (\vee \beta) \setminus (\vee \gamma) = (\vee \beta') \setminus (\vee \gamma)$, which implies that $x \in \vee \beta'$. Therefore there exists a quark $B \in \beta'$ such that $x \in B$. Thus, every element of $A$ belongs to some of the quarks in either $\gamma$ or $\beta'$. Since $A \in \beta$, and $\beta$ is disjoint from $\beta' \sqcup \gamma$, we conclude that $A$ is an excess quark. We can similarly arrive to the same conclusion under the assumption that $A \in \beta'$. As a result, every quark in $\beta \sqcup \beta'$ is an excess quark.
	
	Since $(\vee \beta) \setminus (\vee \gamma) = (\vee \beta') \setminus (\vee \gamma)$, every element in $x \in (\vee \beta) \setminus (\vee \gamma)$ belongs to the intersection of a quark $B \in \beta$ and a quark $B' \in \beta'$, which are both excess quarks. Since $\beta$ and $\beta'$ are disjoint and $x \in B \cap B'$, we see that $B$ and $B'$ are overlapping distinct excess quarks of $S$, which is a contradiction.
\end{proof}

As the following example shows, the converse of Proposition~\ref{prop:UF sufficient condition} does not hold.

\begin{example}
	Consider the Boolean sublattice $S := \langle 123, 234, 345, 456 \rangle$, whose Hasse diagram is illustrated in Figure~\ref{fig:UFS with intersecting excess atoms}. It is clear that $\mathcal{A}(S) = \{ 123, 234, 345, 456 \}$ and also that both $234$ and $345$ are excess quarks with nontrivial intersection. Still, one can readily verify that $S$ is a UFS.
	\begin{figure}[h]
		\includegraphics[width=6cm]{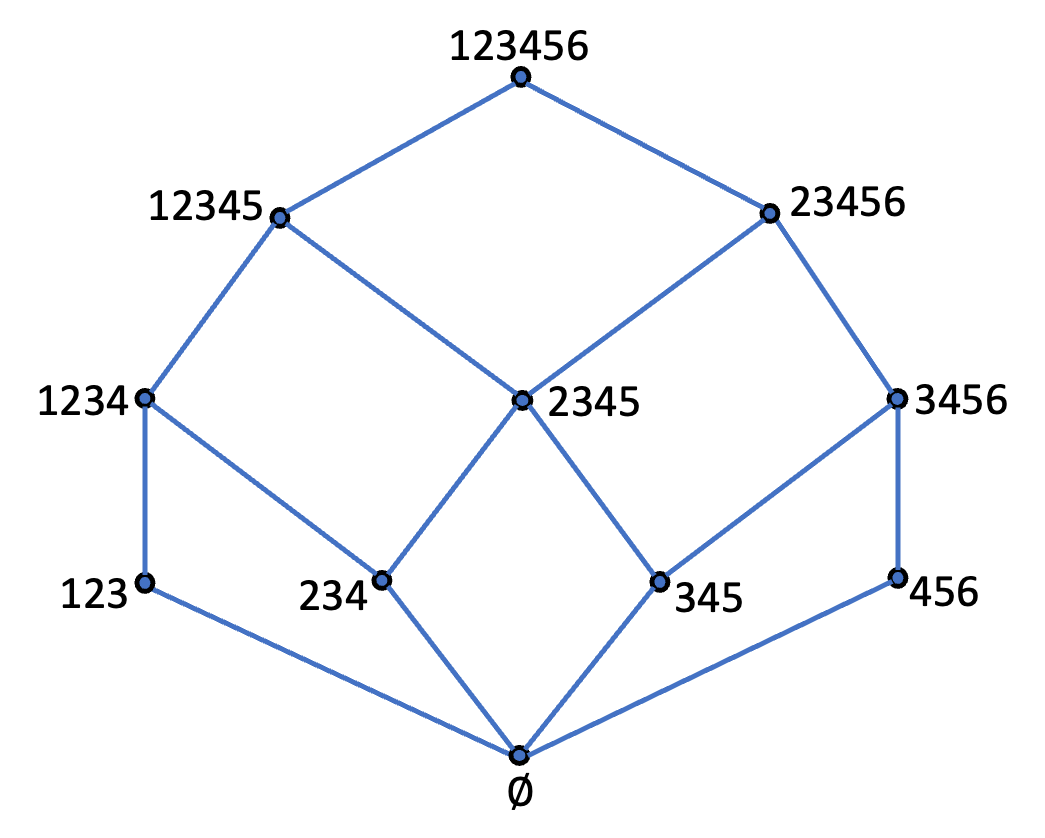}
		\caption{Hasse diagram of the Boolean sublattice generated by the set $\{ 123, 234, 345, 456 \}$.}
		\label{fig:UFS with intersecting excess atoms}
	\end{figure}
\end{example}

Our next goal is to characterize the Boolean sublattices whose quarks have size at most~$2$ that are UFS: we do this based on the corresponding pairing graphs. First, we need the following lemma.

\begin{lemma} \label{lem:size-two atoms LF}
	Let $S$ be a Boolean sublattice whose quarks have size~$2$. If $S$ is a UFS, then the following statements hold.
	\begin{enumerate}
		\item $\mathcal{G}_p(S)$ does not contain any cycle of length $3$.
		\smallskip
		
		\item $\mathcal{G}_p(S)$ does not contain a path/cycle of length $4$.
	\end{enumerate}
\end{lemma}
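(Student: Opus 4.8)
The plan is to prove both statements in contrapositive spirit: in each forbidden case I would exhibit a single element of $S$ that admits two distinct factorizations, directly contradicting the hypothesis that $S$ is a UFS. Throughout I would use that, since every quark of $S$ has size~$2$, each quark is an edge of $\mathcal{G}_p(S)$, and a factorization of an element $X \in S$ is exactly an irredundant set of quarks whose join (i.e., union) equals~$X$; equivalently, a minimal set of edges of $\mathcal{G}_p(S)$ lying below~$X$ that covers every vertex of~$X$. The crucial observation is that to defeat unique factorization it suffices to display \emph{two} distinct irredundant sets with join~$X$, so I never need to enumerate all factorizations; in particular the argument is insensitive to the presence of extra edges (``chords'') among the vertices of the path or cycle, since irredundance of a set $z$ depends only on the joins of its proper subsets, all of which I can check by inspection.

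For part~(1), suppose $\mathcal{G}_p(S)$ contains a cycle of length~$3$ on vertices $a,b,c$, so that $\{a,b\},\{b,c\},\{a,c\} \in \mathcal{A}(S)$. I would set $X := \{a,b,c\} = \{a,b\} \vee \{b,c\} \in S$ and consider $z_1 := \{\{a,b\},\{b,c\}\}$ and $z_2 := \{\{a,b\},\{a,c\}\}$. Both have join~$X$, and each is irredundant because deleting either of its two quarks leaves a vertex of~$X$ uncovered; since $z_1 \neq z_2$, the element~$X$ has two factorizations, contradicting that $S$ is a UFS.

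For part~(2), I would treat the cycle and the path separately. If $\mathcal{G}_p(S)$ contains a cycle of length~$4$ on vertices $a,b,c,d$ with edges $\{a,b\},\{b,c\},\{c,d\},\{d,a\}$, I would set $X := \{a,b,c,d\} \in S$ and take the two opposite matchings $z_1 := \{\{a,b\},\{c,d\}\}$ and $z_2 := \{\{b,c\},\{d,a\}\}$; each has join~$X$ and is irredundant (removing either of its two edges uncovers two vertices), and $z_1 \neq z_2$. If instead $\mathcal{G}_p(S)$ contains a path of length~$4$, say $v_0,v_1,v_2,v_3,v_4$ with edges $e_i := \{v_{i-1},v_i\}$, I would set $X := \{v_0,\dots,v_4\} \in S$. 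Here the degree-one endpoints $v_0$ and $v_4$ force $e_1$ and $e_4$ into any factorization, while the middle vertex $v_2$ can be covered either through $e_2$ or through $e_3$; accordingly I take $z_1 := \{e_1,e_2,e_4\}$ and $z_2 := \{e_1,e_3,e_4\}$. Both have join~$X$ and are irredundant (deleting any single edge uncovers a vertex, as one checks directly), and $z_1 \neq z_2$ since $e_2 \neq e_3$. In either subcase $X$ has two factorizations, contradicting that $S$ is a UFS.

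The only delicate point, and the mild obstacle, is to verify irredundance of each exhibited factorization straight from the covering condition on the chosen edges, rather than by analyzing the induced subgraph $\mathcal{G}_p(S)[X]$; doing it this way is what keeps the argument clean in the presence of possible chords, and each such verification is a short inspection of the proper subsets of the two- or three-element sets involved.
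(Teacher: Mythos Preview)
Your proposal is correct and follows essentially the same approach as the paper: in each case you exhibit the very same element $X$ and (up to relabeling) the same pair of distinct factorizations that the paper uses---two adjacent edges of the triangle for part~(1), the two opposite matchings for the $4$-cycle, and $\{e_1,e_2,e_4\}$ versus $\{e_1,e_3,e_4\}$ for the length-$4$ path. Your explicit remark that irredundance is checked directly on the chosen edge sets, hence is unaffected by possible chords, is a nice clarification that the paper leaves implicit.
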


\begin{proof}
	(1) Observe that the existence of a length-$3$ cycle in $\mathcal{G}_p(S)$ is equivalent to the existence of distinct quarks $\{v_1, v_2\}, \{v_2, v_3\}, \{v_3, v_1\}$ in $S$, in which case $\big\{ \{v_1, v_2\}, \{v_2, v_3\} \big\}$ and $\big\{ \{v_2, v_3\}, \{v_3, v_1\} \big\}$ would be two distinct factorizations of the element $\{v_1, v_2, v_3\}$. However, this is not possible because~$S$ is a UFS.
	\smallskip
	
	(2) First, note that the existence of a length-$4$ path in $\mathcal{G}_p(S)$ is equivalent to the existence of distinct quarks $\{v_1, v_2\}, \{v_2, v_3\}, \{v_3, v_4\}, \{v_4, v_5\}$ in $S$, which would imply that $\big\{ \{v_1, v_2\}, \{v_2, v_3\}, \{v_4, v_5\} \big\}$ and $\big\{ \{v_1, v_2\}, \{v_3, v_4\}, \{v_4, v_5\} \big\}$ are two distinct factorizations of the element $\{v_1, v_2, v_3, v_4, v_5\}$. However, this is not possible because~$S$ is a UFS. Finally, observe that the existence of a length-$4$ cycle is equivalent to the existence of distinct factorizations $\{v_1',v_2'\}, \{v_2', v_3'\}, \{v_3', v_4'\}, \{v_4', v_1'\}$ in $S$, which would imply that $\big\{ \{v_1', v_2'\}, \{v_3',v_4'\} \big\}$ and $\big\{ \{v_1',v_4'\}, \{v_2',v_3'\} \big\}$ are two distinct factorizations of $\{v_1',v_2',v_3',v_4'\}$. This again is not possible because $S$ is a UFS.
\end{proof}

We are in a position to characterize the Boolean sublattices whose quarks have size at most~$2$ that are UFS.

\begin{theorem} \label{thm:UFS characterization for BS with atom-size at most 2}
	Let $S$ be a Boolean sublattice whose quarks have size at most $2$. Then the following conditions are equivalent.
	\begin{enumerate}
		\item[(a)] $S$ is a UFS.
		\smallskip
		
		\item[(b)] For each connected component $C$ of $\mathcal{G}_p(S)$, the Boolean sublattice generated by the edges of $C$ is a UFS.
		\smallskip
		
		\item[(c)] Each connected component of $\mathcal{G}_p(S)$ is a tree with diameter at most $3$.
	\end{enumerate}
\end{theorem}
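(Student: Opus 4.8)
The plan is to prove the equivalences through the cycle $(a)\Leftrightarrow(b)$ and $(b)\Leftrightarrow(c)$, reducing everything to a per-component statement about the Boolean sublattice $\langle E(C)\rangle$ generated by the edges of a connected component $C$ of the pairing graph $\mathcal{G}_p(S)$.

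First I would establish $(a)\Leftrightarrow(b)$ by relating the two graphs. The starting observation is that every singleton quark $\{c\}$ is automatically isolated: if $\{c,b\}$ were also a quark, then $\emptyset < \{c\} < \{c,b\}$ would contradict $\{c,b\}$ covering $\emptyset$. Hence in the quarkic graph $\mathcal{G}(S)$ the singleton quarks are isolated vertices, while two size-$2$ quarks are adjacent exactly when they share an element. Edge/vertex duality (the components of a graph with at least one edge correspond to those of its line graph) then identifies the components of $\mathcal{G}(S)$ consisting of size-$2$ quarks with the components $C$ of $\mathcal{G}_p(S)$ having at least one edge, under the correspondence $V(C') = E(C)$, so that $\langle V(C')\rangle = \langle E(C)\rangle$. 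Applying Proposition~\ref{prop:factoriality and connected components}, $S$ is a UFS iff $\langle V(C')\rangle$ is a UFS for every component $C'$ of $\mathcal{G}(S)$; the singleton components contribute the trivial UFS $\{\emptyset,\{c\}\}$ and the edgeless components of $\mathcal{G}_p(S)$ contribute $\{\emptyset\}$, so the condition collapses to ``$\langle E(C)\rangle$ is a UFS for every component $C$ of $\mathcal{G}_p(S)$'', which is precisely (b).

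It then remains to prove the per-component equivalence: for a connected component $C$ of $\mathcal{G}_p(S)$, $\langle E(C)\rangle$ is a UFS iff $C$ is a tree of diameter at most $3$; summing over components gives $(b)\Leftrightarrow(c)$. For the forward direction I would note that the quarks of $\langle E(C)\rangle$ are precisely the edges of $C$ (there are no inclusions among distinct $2$-element sets, and no union of edges is a singleton), so the pairing graph of $\langle E(C)\rangle$ is $C$ itself and Lemma~\ref{lem:size-two atoms LF} applies: $C$ contains no cycle of length~$3$, no cycle of length~$4$, and no path of length~$4$. A cycle of length at least~$5$ contains a path of length~$4$ (take five consecutive vertices), so $C$ is acyclic; being connected, it is a tree, and the absence of a length-$4$ path forces its diameter to be at most~$3$.

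For the converse I would verify directly that if $C$ is a tree of diameter at most~$3$ --- that is, a single vertex, a single edge, a star $K_{1,n}$, or a double star (two adjacent centers $c_1,c_2$ each carrying leaves) --- then $\langle E(C)\rangle$ is a UFS. The clean device is to recast factorizations as edge covers: a factorization of a nonempty $X\in\langle E(C)\rangle$ is exactly a minimal edge cover of the induced subgraph $C[X]$ (a family of edges contained in $X$ that together touch every vertex of $X$, with no proper subfamily doing so). Since each leaf $v\in X$ lies on a unique edge of $C$, that edge is forced into every factorization; once all such leaf-edges are taken, the only possibly uncovered vertices are the one or two centers, and for each such center $c$ the only admissible covering edge inside $X$ is the central edge $\{c_1,c_2\}$ (any leaf-edge at $c$ would drag a leaf not in $X$ into the cover, hence would already have been forced). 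Thus every $X$ admits a single minimal edge cover, so $\langle E(C)\rangle$ is a UFS. I expect this converse --- in particular the bookkeeping for the double-star case --- to be the main obstacle, although the edge-cover reformulation renders the uniqueness transparent; the forward direction and the deduction of ``tree of diameter at most~$3$'' are comparatively routine once Lemma~\ref{lem:size-two atoms LF} is in hand.
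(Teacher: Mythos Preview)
Your argument is correct and complete, but it departs from the paper's proof in two places worth noting. For $(a)\Leftrightarrow(b)$, the paper simply observes $\mathsf{Z}_{S_C}(X)\subseteq\mathsf{Z}_S(X)$ for the forward direction and never proves $(b)\Rightarrow(a)$ directly, instead closing the loop via $(c)$; your line-graph identification of the non-singleton part of $\mathcal{G}(S)$ with $L(\mathcal{G}_p(S))$ and appeal to Proposition~\ref{prop:factoriality and connected components} is a cleaner two-sided reduction. For $(b)\Rightarrow(c)$, you go straight from Lemma~\ref{lem:size-two atoms LF} to ``acyclic of diameter $\le 3$'', whereas the paper detours through a case split on whether $C$ has an excess quark; your route is shorter. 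The largest difference is the converse: the paper proves $(c)\Rightarrow(a)$ by showing that each component carries at most one excess quark and then invoking Proposition~\ref{prop:UF sufficient condition}, while you prove $(c)\Rightarrow(b)$ by a bare-hands minimal-edge-cover analysis of stars and double stars. Your approach is more elementary and self-contained (it never needs the excess-quark machinery of Proposition~\ref{prop:UF sufficient condition}); the paper's approach is more conceptual and illustrates that Proposition~\ref{prop:UF sufficient condition} is doing real work. One small point of phrasing: when you say a leaf-edge at an uncovered center ``would drag a leaf not in $X$ into the cover, hence would already have been forced,'' it is clearer to split the dichotomy explicitly---either the leaf is in $X$ (so its edge was already forced, contradicting that the center is uncovered) or it is not (so the edge is inadmissible).
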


\begin{proof}
	Since each quark of $S$ that is a singleton must be an isolated quark, in light of Corollary~\ref{cor:isolated atoms UFL} we can assume that $S$ contains no singletons. Set $G := \mathcal{G}_p(S)$.
	\smallskip
		
	(a) $\Rightarrow$ (b): Suppose that $S$ is a UFS. Let $C$ be a connected component of $G$, and let $S_C$ be the Boolean sublattice generated by the set $E(C)$. Since there are no singletons in $S$, a set $A \in S$ is a quark if and only if $|A| = 2$. Therefore $\mathcal{A}(S_C) = E(C) \subseteq \mathcal{A}(S)$. As a result, for each $X \in S_C$, the inclusion $\mathsf{Z}_{S_C}(X) \subseteq \mathsf{Z}_S(X)$ holds (indeed, equality holds). Hence the fact that $S$ is a UFS implies that $S_C$ is also a UFS.

	\smallskip
	(b) $\Rightarrow$ (c): Suppose now that each Boolean sublattice generated by the edges of a connected component of $G$ is a UFS. Let $C$ be a connected component of $G$. If $C$ has exactly one edge, then we are done. Therefore we assume that $|E(C)| \ge 2$.
	
	Suppose first that no edge of $C$ corresponds to an excess quark. Since $|E(C)| \ge 2$, we can pick a vertex $v$ of $C$ such that $\deg v \ge 2$. We claim that every vertex adjacent to $v$ in $C$ has degree $1$. To argue this, fix a vertex $w$ adjacent to $v$ in $C$. Since $\deg v \ge 2$, there exists $A \in \mathcal{A}(S)$ such that $v \in A$ and $A \neq \{v,w\}$. Note that the only vertex adjacent to $w$ in $C$ must be $v$ as otherwise there would exist $B \in \mathcal{A}(S)$ with $B \neq \{v,w\}$ such that $w \in B$ and so $\{v,w\} \subseteq A \cup B$, which is not possible because $\{v,w\}$ is not an excess quark by assumption. Hence each vertex in $C$ adjacent to $v$ has degree~$1$, which implies that $C$ is a star graph, that is, a tree with diameter at most~$2$.
	
	Suppose, on the other hand, that there exists an edge of $C$ corresponding to an excess quark, namely, $\{v,w\}$. Then we can take $v', w' \in V(C) \setminus \{v,w\}$ such that $\{v,v'\}$ and $\{w,w'\}$ both belong to $\mathcal{A}(S)$. Since the Boolean sublattice $\langle E(C) \rangle$ is a UFS, it follows from part~(1) of Lemma~\ref{lem:size-two atoms LF} that $v' \neq w'$. We claim that $\deg v' = 1$. Suppose that this is not the case. Then one could take a vertex $v''$ of $C$ such that $\{v', v''\} \in \mathcal{A}(S)$ and $v'' \neq v$. It follows from part~(1) of Lemma~\ref{lem:size-two atoms LF} that $v'' \neq w$ and, therefore, the edges $\{v'', v'\}, \{v', v\}, \{v,w\}, \{w,w'\}$ would form a length-$4$ path (if $v'' \neq w'$) or a length-$4$ cycle (if $v'' = w'$), which is not possible in light of part~(2) of Lemma~\ref{lem:size-two atoms LF}. Thus, every vertex in $V(C) \setminus \{w\}$ adjacent to $v$ has degree~$1$. Similarly, we can argue that every vertex in $V(C) \setminus \{v\}$ adjacent to $w$ has degree~$1$. Hence $C$ is a tree with diameter~$3$.
	\smallskip
	
	(c) $\Rightarrow$ (a): Finally, suppose that each connected component of $G$ is a tree with diameter at most~$3$. By virtue of Proposition~\ref{prop:UF sufficient condition}, it suffices to check that at most one edge in each connected component of~$G$ corresponds to an excess quark of $S$. Let $C$ be a connected component of $G$. If $C$ has diameter at most~$2$, then $C$ is either a tree with two vertices (if $C$ has diameter~$1$) or a star graph (if $C$ has diameter~$2$), and it is clear that in both cases $C$ has no excess quarks. Now suppose that $C$ has diameter $3$, and let $\{v_0, v_1\}, \{v_1, v_2\}, \{v_2, v_3\}$ be three edges forming a path of length~$3$. Since $C$ has diameter~$3$, every vertex $v$ adjacent to $v_1$ (resp., to $v_2$) with $v \neq v_2$ (resp., $v \neq v_1$) must have degree~$1$. This, along with the fact that no edge of $C$ incident to a degree-$1$ vertex can correspond to an excess quark of $S$, implies that the only edge of $C$ corresponding to an excess quark is $\{v_1, v_2\}$. As a consequence, $S$ is a UFS.
\end{proof}

%

\medskip
\section{The Half-Factorial Property}
\label{subsec:half-factoriality}

In this section, we investigate the property of half-factoriality. In the context of Boolean sublattices, this property is strictly weaker than the unique factorization property. The following simple example illustrates this observation.

\begin{example} \label{ex:HFS-FFS not LFS-UFS}
	Consider the Boolean sublattice $S := \langle 12, 23, 13 \rangle$, which is factorizable with set of quarks $\mathcal{A}(S) = \{ 12, 23, 13 \}$. Although the element $123$ has three factorizations, namely, $\{12, 13\}$, $\{12, 23\}$, and $\{13,23\}$, all of them has length $2$. As a consequence, $S$ is an HFS that is not a UFS.
\end{example}

Our main purpose is to provide a characterization of the Boolean sublattices with quarks of size at most $2$ that are HFSs. We start with a result parallel to Corollary~\ref{cor:isolated atoms UFL}, whose proof immediately follows from part~(4) of Lemma~\ref{lem:decomposition lemma}.

\begin{prop} \label{prop:isolated atoms HFL}
	Let $S$ be a Boolean sublattice, and let $\mathcal{A}_I(S)$ be the set of isolated quarks of~$S$. Then $S$ is an HFS if and only if $\langle \mathcal{A}(S) \setminus \mathcal{A}_I(S) \rangle$ is an HFS.
\end{prop}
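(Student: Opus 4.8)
The plan is to set $S' := \langle \mathcal{A}(S)\setminus\mathcal{A}_I(S)\rangle$ and establish a length-preserving (up to a fixed additive shift) correspondence between factorizations in $S$ and factorizations in $S'$, obtained by stripping off the isolated quarks one at a time. Throughout I would assume $S$ is factorizable, both because this is needed to invoke Lemma~\ref{lem:decomposition lemma}(4) and because being an HFS presupposes factorizability. I would also record at the outset that $\mathcal{A}(S') = \mathcal{A}(S)\setminus\mathcal{A}_I(S)$ and that $S'$ is itself factorizable; both follow from the argument of Lemma~\ref{lem:decomposition lemma}(1) applied to the union of the non-isolated connected components of $\mathcal{G}(S)$, since an isolated quark is never below a non-isolated one.

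First I would prove the key formula. For $Y \in S\setminus\{\hat 0\}$, let $m_Y$ be the number of isolated quarks of $S$ contained in $Y$, and let $X_Y := Y\setminus\bigsqcup\{A\in\mathcal{A}_I(S) : A\le Y\}$. The claim is that $X_Y\in S'$ and
\[
	\mathsf{L}_S(Y)= m_Y + \mathsf{L}_{S'}(X_Y),
\]
with the convention that the right-hand side reads $\{m_Y\}$ when $X_Y=\hat 0$; in particular $|\mathsf{L}_S(Y)|=|\mathsf{L}_{S'}(X_Y)|$. I would prove this by induction on $m_Y$ using Lemma~\ref{lem:decomposition lemma}(4). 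For the inductive step, pick an isolated quark $A\le Y$; since $A$ is disjoint from every other quark, it is the only quark covering its elements, so $A$ must appear in every factorization of $Y$. Writing $X := Y\setminus A = \vee(z\setminus\{A\})$ for any factorization $z$ of $Y$ then shows $X\in S$ and $A\not\le X$, so part~(4) yields $\mathsf{Z}_S(Y)=\{w\cup\{A\} \mid w\in\mathsf{Z}_S(X)\}$ and hence $\mathsf{L}_S(Y)=\mathsf{L}_S(X)+1$; as $X$ has exactly $m_Y-1$ isolated quarks below it and the same non-isolated part $X_Y$, the induction closes. The base case $m_Y=0$ amounts to the observation that when no isolated quark lies below $Y$ every quark $\le Y$ is a quark of $S'$, whence $\mathsf{Z}_S(Y)=\mathsf{Z}_{S'}(Y)$.

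Finally I would read off both implications from the formula. For the reverse direction, given $Z\in S'\setminus\{\hat 0\}$, factorizability of $S'$ lets me write $Z$ as a join of non-isolated quarks, each disjoint from every isolated quark, so no isolated quark lies below $Z$; hence $m_Z=0$ and $X_Z=Z$, giving $\mathsf{L}_S(Z)=\mathsf{L}_{S'}(Z)$. Thus if $S$ is an HFS, then $\mathsf{L}_{S'}(Z)$ is a singleton for every such $Z$, so $S'$ is an HFS. For the forward direction, given any $Y\in S\setminus\{\hat 0\}$, either $X_Y=\hat 0$, in which case $\mathsf{L}_S(Y)=\{m_Y\}$ is automatically a singleton, or $X_Y\in S'\setminus\{\hat 0\}$ and $|\mathsf{L}_S(Y)|=|\mathsf{L}_{S'}(X_Y)|=1$ because $S'$ is an HFS; in either case $\mathsf{L}_S(Y)$ is a singleton, so $S$ is an HFS.

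I expect the main obstacle to be the careful verification that the stripping operation stays inside $S$ and lands in $S'$ — specifically, that $X := Y\setminus A$ is again an element of $S$ so that Lemma~\ref{lem:decomposition lemma}(4) applies, and that $X_Y$ genuinely belongs to $S'$ — together with the bookkeeping ensuring a clean bijection with a uniform length shift rather than a mere inequality of cardinalities. The two complementary observations that every isolated quark below $Y$ must appear in every factorization of $Y$, and that no isolated quark can lie below an element of $S'$, are precisely what make both halves of the equivalence line up.
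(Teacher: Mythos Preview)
Your proposal is correct and follows essentially the same approach as the paper: the paper simply asserts that the proof ``immediately follows from part~(4) of Lemma~\ref{lem:decomposition lemma},'' and you have carefully spelled out exactly that argument, stripping off isolated quarks one at a time via the bijection $\mathsf{Z}(X\vee A)=\{z\cup\{A\}\mid z\in\mathsf{Z}(X)\}$ to obtain the length formula $\mathsf{L}_S(Y)=m_Y+\mathsf{L}_{S'}(X_Y)$. Your added care in verifying that $Y\setminus A\in S$, that $X_Y\in S'$, and that $\mathcal{A}(S')=\mathcal{A}(S)\setminus\mathcal{A}_I(S)$ is appropriate bookkeeping the paper leaves implicit.
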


Let us now restrict our attention to factorizable Boolean sublattices whose quarks have size at most~$2$. The following lemma will be useful in the proof of Theorem~\ref{thm:HF Boolean sublattices of atom-size 2}.

\begin{lemma} \label{lem:size-two atoms HF}
	Let $S$ be a Boolean sublattice whose quarks have size~$2$. If $S$ is an HFS, then the following statements hold.
	\begin{enumerate}
		\item $\mathcal{G}_p(S)$ does not contain any path of length $5$.
		\smallskip
		
		\item If $\mathcal{G}_p(S)$ has a component $C$ containing a cycle of length~$3$, then $C = C_3$, where $C_3$ is the cycle graph of length~$3$. 
		\smallskip
		
		\item If $\mathcal{G}_p(S)$ has a component $C$ containing a cycle of length~$5$, then $C = C_5$, where $C_5$ is the cycle graph of length~$5$.
	\end{enumerate}
\end{lemma}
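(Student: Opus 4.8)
The plan is to translate factorizations into the combinatorics of edge covers in $G := \mathcal{G}_p(S)$ and then, for each part, to produce by hand a single element carrying two factorizations of different lengths. Since every quark of $S$ has size exactly $2$, the quarks of $S$ are precisely the edges of $G$, so for a nonempty $X \in S$ a factorization of $X$ is the same thing as a minimal set $F \subseteq E(G)$ whose incident vertices are exactly the elements of $X$ (here minimal means that deleting any edge of $F$ leaves some element of $X$ on no remaining edge), and the length of the factorization equals $|F|$. With this dictionary in place, to contradict the HFS hypothesis in each part it suffices to exhibit one $X \in S$ with two such minimal covers of distinct sizes. The guiding heuristic throughout is that a matching covers its vertices economically, contributing two new vertices per edge, whereas forcing two chosen edges to share a vertex spends an extra edge on the same vertex set.

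For part (1), I would take a path of length $5$ on distinct vertices $v_0, v_1, \dots, v_5$ and set $X := \{v_0, \dots, v_5\}$. Then $\big\{ \{v_0,v_1\}, \{v_2,v_3\}, \{v_4,v_5\} \big\}$ is a factorization of length $3$ (a matching), whereas $\big\{ \{v_0,v_1\}, \{v_1,v_2\}, \{v_3,v_4\}, \{v_4,v_5\} \big\}$ is a factorization of length $4$; minimality of the latter follows because each of $v_0, v_2, v_3, v_5$ lies on a single edge of the set. For part (2), assuming a component $C$ contains a triangle on $a,b,c$ but $C \ne C_3$, I note that a simple graph on three vertices carrying a triangle is already $C_3$, so there is a vertex of $C$ outside $\{a,b,c\}$; by connectivity and the symmetry of the triangle I may assume an edge $\{a,d\}$ with $d \notin \{a,b,c\}$. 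With $X := \{a,b,c,d\}$, the covers $\big\{ \{b,c\}, \{a,d\} \big\}$ and $\big\{ \{a,b\}, \{a,c\}, \{a,d\} \big\}$ are factorizations of lengths $2$ and $3$.

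For part (3), I would assume a component $C$ contains a $5$-cycle on $v_1, \dots, v_5$ but $C \ne C_5$, and split into two cases. If $C$ has a vertex off the cycle, then by connectivity some cycle vertex, relabelled $v_1$, has a neighbor $d \notin \{v_1, \dots, v_5\}$; with $X := \{v_1, \dots, v_5, d\}$ the set $\big\{ \{v_1,d\}, \{v_2,v_3\}, \{v_4,v_5\} \big\}$ is a factorization of length $3$ and $\big\{ \{v_1,v_2\}, \{v_3,v_4\}, \{v_5,v_1\}, \{v_1,d\} \big\}$ is a factorization of length $4$. Otherwise $V(C) = \{v_1, \dots, v_5\}$, so $C$ must have a chord; since every chord of a $5$-cycle closes a triangle with two of the cycle edges, $C$ contains a $3$-cycle, and then part (2) forces $C = C_3$, which is absurd as $|V(C)| = 5$. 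Either way we reach a contradiction, so $C = C_5$.

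The routine but essential point to get right in every part is the verification of minimality of the exhibited covers and that the two lengths really differ. The one genuine subtlety, and the step I would flag as the main obstacle, is that $S$ might carry additional quarks among the vertices I have listed; however, since both the value of the join and the minimality of a given edge set depend only on that set, any such extra quarks can only add further factorizations and cannot spoil the two I construct. The remaining care is the bookkeeping in part (3): justifying the reduction to the two cases and the appeal to part (2) for the chord case.
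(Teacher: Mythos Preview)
Your proof is correct and follows essentially the same approach as the paper: for each part you exhibit an element with two explicit factorizations of different lengths, using the same matching-versus-overlapping-edges idea, and your constructions in parts~(1) and~(2) are identical to the paper's up to relabelling. For part~(3) the paper simply writes that the argument ``follows the lines'' of part~(2), whereas you are more careful, explicitly splitting into the extra-vertex case and the chord case and disposing of the latter by invoking part~(2); this fills in a detail the paper leaves implicit (the chord case does not arise in part~(2) because a triangle on three vertices is already $K_3$, but it can arise for a $5$-cycle).
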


\begin{proof}
	(1) Suppose, for the sake of a contradiction, that $\mathcal{G}_p(S)$ contains a path of length~$5$. Then there exist distinct $v_1, v_2, v_3, v_4, v_5, v_6 \in \nn$ such that $\{v_1, v_2\}, \{v_2, v_3\}, \{v_3, v_4\}, \{v_4, v_5\}, \{v_5, v_6\} \in \mathcal{A}(S)$. In this case, we see that $\{v_1, v_2\} \vee \{v_3, v_4\} \vee \{v_5, v_6\}$ and $\{v_1, v_2\} \vee \{v_2, v_3\} \vee \{v_4, v_5\} \vee \{v_5, v_6\}$ are two factorizations with distinct lengths of the element $\{v_1, v_2, v_3, v_4, v_5, v_6\}$ in $S$, which contradicts the fact that~$S$ is an HFS.
	\smallskip
	
	(2) Now suppose that $C$ is a connected component of $\mathcal{G}_p(S)$ containing a cycle of length~$3$. Then we can take $\{v_1, v_2\}, \{v_2, v_3\}, \{v_3, v_1\} \in \mathcal{A}(S)$ such that $v_1, v_2$, and $v_3$ are distinct vertices of~$C$. Now observe that if $V(C) \setminus \{v_1, v_2, v_3\}$ were nonempty, then we would be able to take $v_4 \in V(C)$ adjacent to some vertex of $C$, say to $v_1$: however, in this case $\{v_1, v_2\} \vee \{v_1, v_3\} \vee \{v_1, v_4\}$ and $\{v_1, v_4\} \vee \{v_2, v_3\}$ would be two factorizations with distinct lengths of the element $\{v_1, v_2, v_3, v_4\}$ in $S$, which is not possible because $S$ is an HFS. Hence $C = C_3$.
	\smallskip
	
	(3) The argument follows the lines of that given to establish part~(2).
\end{proof}

Thus, for a Boolean sublattice to be an HFS, its pairing graph must satisfy the restrictions (1)--(3) in Lemma~\ref{lem:size-two atoms HF}. Observe that trees with diameter at most $4$ satisfy the same restrictions. There is another more general class of connected graphs satisfying such restrictions.

\begin{definition}
	A connected graph $G$ is called a \emph{candy graph} if it satisfies the following two conditions.
	\begin{enumerate}
		\item $G$ has diameter at most~$4$.
		\smallskip
		
		\item $G$ contains a length-$k$ cycle if and only if $k = 4$.
	\end{enumerate}
\end{definition}

Candy graphs are a crucial ingredient for the characterization of half-factoriality we will provide in Theorem~\ref{thm:HF Boolean sublattices of atom-size 2}. Here is an example of a candy graph.

\begin{example} \label{ex:candy graph}
	Let $a, a_1, a_2, a_3, b_1, b_2, b_3, b_4, c, c_1, c_2$ be distinct positive integers, and consider the Boolean sublattice $S$ whose pairing graph is that in Figure~\ref{fig:a candy graph}. It is not hard to verify that $S$ is an HFS.
	\begin{figure}[h]
		\includegraphics[width=6.5cm]{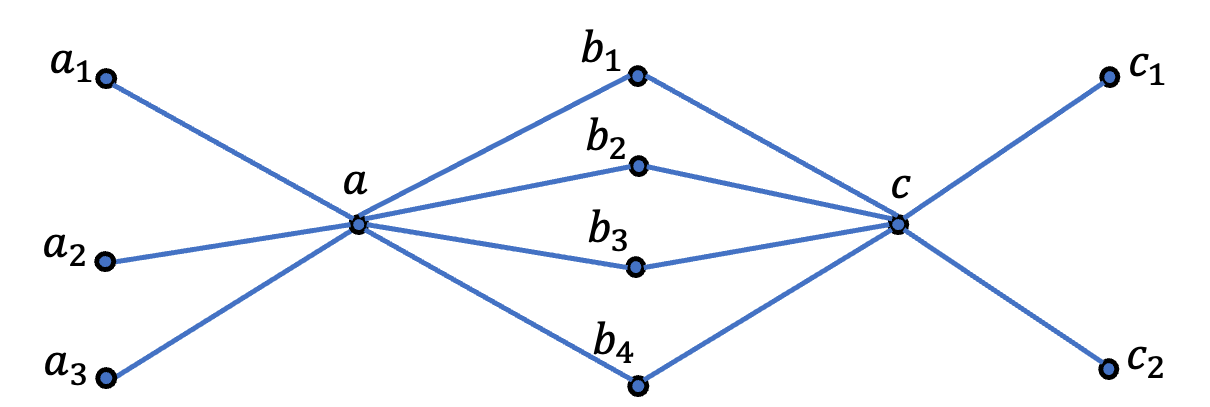}
		\caption{A candy graph with $11$ vertices and $\binom{4}{2}$ cycles.}
		\label{fig:a candy graph}
	\end{figure}
\end{example}

Let us prove some facts about candy graphs. Throughout the proof of Lemma~\ref{lem:candy graphs}, we will choose the notation consistent with that of Example~\ref{ex:candy graph}, and so looking at the candy graph illustrated in such an example may help the reader to follow our argument. 

\begin{lemma} \label{lem:candy graphs}
	For a candy graph $G$, the following statements hold.
	\begin{enumerate}
		\item Every vertex of degree at least $3$ is part of a cycle.
		\smallskip
		
		\item If a non-leaf vertex is adjacent to a vertex of degree at least~$3$, then it has degree~$2$.
		\smallskip
				
		\item $G$ has at most two vertices with degree at least $3$.
		\smallskip
		
		\item If $G$ has two vertices of degree at least $3$, then they are at distance $2$ and their common adjacent vertices  are precisely the vertices of $G$ with degree~$2$.
	\end{enumerate}
\end{lemma}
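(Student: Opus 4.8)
The plan is to prove all four statements by contradiction, in each case exhibiting a configuration that is forbidden in a candy graph. It helps to first fix how the two defining conditions will be used. Recall that in this paper the \emph{diameter} is the length of a longest path (a sequence of edges on pairwise distinct vertices), so condition~(1) is equivalent to: $G$ has no path of length~$5$. Condition~(2) says $G$ has at least one cycle and every cycle has length exactly~$4$; in particular $G$ has no triangle, so \emph{no two adjacent vertices have a common neighbor}. Since a shortest path is a path in the above sense, condition~(1) also gives $d(x,y)\le 4$ for all vertices $x,y$, where $d$ denotes graph distance. The only two obstructions I will ever invoke are ``no length-$5$ path'' and ``no triangle,'' and every potential coincidence of vertices in a candidate path will be ruled out by reducing it to one of these.

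For part~(1), suppose $v$ has $\deg v\ge 3$ but lies on no cycle. Then every edge at $v$ is a bridge, so deleting $v$ breaks $G$ into at least three pieces, each joined to $v$ by a single edge; the $4$-cycle guaranteed by condition~(2) avoids $v$ and hence lies in one piece $T_1$. Choosing a neighbor $u$ of $v$ in a different piece and walking $u\to v$, then along a shortest path from $v$ into $T_1$ to the point where it first meets the cycle, and then around the cycle, produces a path of length at least~$5$, a contradiction. For part~(2) I prove the equivalent statement that no edge joins two vertices of degree at least~$3$ (granting this, a non-leaf neighbor of a degree-$\ge 3$ vertex has degree exactly~$2$). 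So suppose $u\sim v$ with $\deg u,\deg v\ge 3$; by part~(1) both lie on $4$-cycles. If they share a $4$-cycle $v\,u\,p\,q$, then for extra neighbors $a$ of $v$ and $b$ of $u$ the sequence $a,v,q,p,u,b$ is a path whose six vertices are distinct (any coincidence would create a triangle), hence a length-$5$ path. If no $4$-cycle contains both, I fix a $4$-cycle $v\,x\,y\,z$ through $v$ (so $u\notin\{x,y,z\}$) and a suitable extra neighbor $c$ of $u$, and take $z,y,x,v,u,c$; once more the only possible collisions would force a triangle through $v$, so this is a length-$5$ path. Either way condition~(1) is violated.

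For parts~(3) and~(4) the crucial intermediate fact is that any two vertices $v,v'$ of degree at least~$3$ satisfy $d(v,v')=2$. They are non-adjacent by part~(2), so $d(v,v')\ge 2$; if $d(v,v')\ge 4$, prepending to a shortest $v$--$v'$ path a cycle-neighbor of $v$ that avoids that path gives a length-$5$ path, while the borderline case $d(v,v')=3$ either yields such a path or forces the vertex opposite $v$ on its $4$-cycle to be adjacent to $v'$, contradicting part~(2). Granting this, part~(3) is short: three degree-$\ge 3$ vertices $v_1,v_2,v_3$ would be pairwise at distance~$2$, with pairwise common neighbors $w_{12},w_{13},w_{23}$ that have degree~$2$ by part~(2) and are pairwise distinct (a shared one would have degree~$\ge 3$); then $w_{23},v_2,w_{12},v_1,w_{13},v_3$ is a path of length~$5$, a contradiction, so $G$ has at most two such vertices.

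Finally, for part~(4) I take $v,v'$ to be the two degree-$\ge 3$ vertices (part~(3)), so $d(v,v')=2$ by the intermediate fact, and every common neighbor has degree~$2$ by part~(2). It remains to show every degree-$2$ vertex $w$ is a common neighbor. First, $w$ must be adjacent to $v$ or $v'$: otherwise a shortest path from $w$ to $\{v,v'\}$ (of length $\le 3$ by the distance bound) is prolonged at the $w$-end by the second neighbor of $w$ and at the $v$-end by a cycle-neighbor of $v$ (or, when that path has length~$2$, routed through a common neighbor of $v,v'$) to yield a length-$5$ path. Second, if $w\sim v$ but $w\not\sim v'$, its other neighbor $t$ cannot be a leaf ($t,w$ together with the $4$-cycle at $v$ would give a length-$5$ path), cannot be a degree-$2$ vertex adjacent to $v$ (triangle), and cannot be adjacent to $v'$ ($y,x,v,w,t,v'$ would be a length-$5$ path); so $w\sim v'$ after all. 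Hence the degree-$2$ vertices are exactly the common neighbors of $v$ and $v'$. The routine part throughout is the cycle-and-bridge bookkeeping; the main obstacle is the distinctness analysis in the distance-$2$ lemma and in this last step, where each candidate length-$5$ path has six vertices and every possible collision among them must be shown to force a triangle or an edge between two degree-$\ge 3$ vertices. Arranging all of these coincidence checks so that they collapse to exactly these two forbidden patterns is what makes the argument run cleanly.
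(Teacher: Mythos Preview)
Your forbidden-configuration approach differs from the paper's. Rather than manufacturing length-$5$ paths case by case, the paper picks one vertex $a$ with $\deg a\ge 3$, fixes a $4$-cycle $a,b_1,c,b_2$ through it, and shows directly that every non-leaf neighbor of $a$ is also adjacent to $c$ (and, by symmetry, vice versa). This yields $V(G)=\{a,c\}\cup B\cup A\cup C$, where $B$ is the set of common neighbors of $a$ and $c$ and $A,C$ are the leaves hanging off $a,c$; parts~(3) and~(4) drop out immediately from this description. Your route is valid in principle and more modular, but the structural argument is shorter here because it gives a complete picture of $G$ in one pass.

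There is, however, a genuine gap in your part~(4). In the last case of your second sub-claim ($w\sim v$, $w\not\sim v'$, and the other neighbor $t$ of $w$ satisfies $t\sim v'$) you offer the path $y,x,v,w,t,v'$, where $v,x,y,z$ is a $4$-cycle through~$v$. But at this point you have already established sub-claim~1, and applying it to $y$ forces $y=v'$: indeed $y\not\sim v$, and if $\deg y=2$ then sub-claim~1 gives $y\sim v'$, so $v'\in\{x,z\}$ and hence $v\sim v'$, contradicting part~(2); therefore $\deg y\ge 3$ and $y=v'$. Thus your six vertices are \emph{never} distinct. The repair is easy once you know $y=v'$: the cycle reads $v,x,v',z$, and the path $z,v,w,t,v',x$ works (every possible coincidence now genuinely reduces to a triangle or to $v\sim v'$). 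A smaller point in the same spirit: in part~(2), case ``no shared $4$-cycle,'' the collision $c=y$ does not create a triangle but rather the $4$-cycle $v,u,y,x$ containing both $u$ and $v$, contradicting the case hypothesis; your ``suitable $c$'' can dodge this, but the blanket claim that every coincidence forces a triangle is not quite right.
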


\begin{proof}
	If $G$ does not contain any vertex of degree at least $3$, then the four statements of the lemma follow trivially. Thus, we assume that $G$ has a vertex $a$ such that $\deg a \ge 3$.
	\smallskip
	
	(1) Suppose, for the sake of a contradiction, that the vertex $a$ is not part of any cycle. Let $\{a, a_1\}, \{a_1, a_2\}, \dots, \{a_{\ell - 1}, a_\ell\}$ be a shortest path from $a$ to a cycle $\{a_\ell, v_1\}, \{v_1, v_2\}, \{v_2, v_3\}, \{v_3, a_\ell \}$. Assume first that $\ell = 1$. Then $a$ is not adjacent to neither $v_1$ nor $v_3$ because $G$ contains no length-$3$ cycles. This, along with the fact that $\deg a \ge 3$, ensures the existence of $a_0 \in V(G) \setminus \{a_1, v_1, v_2, v_3\}$ that is adjacent to $a$. Hence $\{a_0, a\}, \{a, a_1\}, \{a_1, v_1\}, \{v_1, v_2\}, \{v_2, v_3\}$ is a path of length $5$, a contradiction. Assume, therefore, that $\ell \ge 2$. In such a case, $\{a, a_1\}, \{a_1, a_2\}, \dots, \{a_{\ell - 1}, a_\ell\}, \{a_\ell, v_1\}, \{v_1, v_2\}, \{v_2, v_3\}$ is a path of length at least $5$, which contradicts that $G$ is a candy graph.
	\smallskip
	
	(2)  By part~(1), the vertex~$a$ must be part of a length-$4$ cycle, namely, $\{a,b_1\}, \{b_1, c\}, \{c, b_2\}, \{b_2, a\}$. Let $b_1, \dots, b_n$ be the non-leaf vertices that are adjacent to~$a$. Now observe that if $\deg \, b_k \ge 3$ for some $k \in \ldb 1, n \rdb$, then after taking $c' \in V(G) \setminus \{a,c\}$ adjacent to $b_k$ and $b_i, b_j \in \{b_1, \dots, b_n\} \setminus \{b_k\}$, we would be able to construct in $G$ the following length-$5$ path $\{c',b_k\}, \{b_k,a\}, \{a,b_i\}, \{b_i,c\}, \{c, b_j\}$, which is not possible. Hence $\deg b_i = 2$ for every $i \in \ldb 1,n \rdb$.
	\smallskip
	
	(3)-(4) Let the vertices $b_1, \dots, b_n$, and $c$ be defined as in part~(2). We argue first that $b_i$ is adjacent to~$c$ for every $i \in \ldb 1,n \rdb$. Suppose, otherwise, that $b_i$ is not adjacent to $c$ for some $i \in \ldb 3,n \rdb$ (we know that $b_1$ and $b_2$ are adjacent to $c$). Observe that the vertices $b_i$ and $b_j$ are not adjacent for any $i, j \in \ldb 1,n \rdb$ with $i \neq j$ because $G$ contains no length-$3$ cycles. Let $b' \in V(G) \setminus \{a,c\}$ be a vertex adjacent to $b_i$. Since $G$ has no length-$3$ cycles, $b' \notin \{b_1, b_2\}$. As a result, $\{b',b_i\}, \{b_i,a\}, \{a, b_1\}, \{b_1, c\}, \{c, b_2\}$ is a path in $G$ of length~$5$, which contradicts that~$G$ is a candy graph. Hence $b_i$ is adjacent to $c$ for every $i \in \ldb 1,n \rdb$.
	\smallskip
 
	We can mimic the previous argument to prove that every non-leaf vertex adjacent to $c$ is also adjacent to $a$. Thus, the non-leaf vertices adjacent to~$c$ are precisely $b_1, \dots, b_n$. Since $G$ is connected and $\deg b_i = 2$ for every $i \in \ldb 1,n \rdb$,
	\[
		V(G) = A \cup C \cup \big\{ a,c, b_1, \dots, b_n \big\},
	\]
	where $A$ denotes the set of leaves adjacent to $a$ while $C$ denotes the set of leaves adjacent to $c$. Thus, besides $a$, the only vertex of $G$ that can have degree at least $3$ is $c$, which yields statement~(3). Also, if~$c$ is the other vertex of $G$ with degree at least~$3$, then the common adjacent vertices of $a$ and $c$ are $b_1, \dots, b_n$, which are precisely the vertices of $G$ with degree $2$, whence statement~(4) follows.
\end{proof}

We are in a position to characterize half-factoriality inside the class consisting of all Boolean sublattices whose quarks have size at most~$2$.

\begin{theorem} \label{thm:HF Boolean sublattices of atom-size 2}
	Let $S$ be a Boolean sublattice whose quarks have size at most $2$. Then the following conditions are equivalent.
	\begin{enumerate}
		\item[(a)] $S$ is an HFS.
		\smallskip
		
		\item[(b)] For each connected component $C$ of $\mathcal{G}_p(S)$, the Boolean sublattice generated by the edges of $C$ is an HFS.
		\smallskip
		
		\item[(c)] The connected components of $\mathcal{G}_p(S)$ are length-$3$ cycles, length-$5$ cycles, trees with diameters at most $4$, or candy graphs.
	\end{enumerate}
\end{theorem}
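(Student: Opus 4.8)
The plan is to establish the triple equivalence by proving (a) $\Leftrightarrow$ (b) and then (b) $\Leftrightarrow$ (c), mirroring the structure of the proof of Theorem~\ref{thm:UFS characterization for BS with atom-size at most 2}. As a preliminary reduction I would invoke Proposition~\ref{prop:isolated atoms HFL} to discard the isolated quarks of $S$: every singleton quark is isolated, and an isolated quark spans a single-vertex or single-edge component of $\mathcal{G}_p(S)$ (a tree of diameter at most~$1$ whose associated sublattice is trivially half-factorial), so this move is harmless for all three conditions and lets me assume that every quark has size exactly~$2$, whence $\mathcal{A}(S)$ is precisely the edge set of $\mathcal{G}_p(S)$. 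For (a) $\Leftrightarrow$ (b) the key point is that the components of $\mathcal{G}_p(S)$ carrying an edge are in bijection with the components of the quarkic graph $\mathcal{G}(S)$ (the latter being the line graph of the former on quark-edges), and under this bijection $\langle E(C) \rangle$ coincides with the sublattice $\langle V(C') \rangle$ attached to the corresponding quarkic component $C'$. Then (a) $\Rightarrow$ (b) follows because a factorization of $X \in \langle E(C) \rangle$ in the smaller sublattice is exactly a factorization of $X$ in $S$, so $\mathsf{L}_{\langle E(C) \rangle}(X) = \mathsf{L}_S(X)$ is a singleton; and (b) $\Rightarrow$ (a) follows from part~(3) of Lemma~\ref{lem:decomposition lemma}, whose bijection $z \mapsto (z_1, \dots, z_k)$ is length-additive ($|z| = \sum_i |z_i|$), so that $\mathsf{L}_S(X)$ is the Minkowski sum of the sets $\mathsf{L}_{\langle V(C_i') \rangle}(X_i)$ and is a singleton whenever each summand is. This is the half-factorial analogue of Proposition~\ref{prop:factoriality and connected components}.

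For (b) $\Rightarrow$ (c), I would fix a component $C$ and apply Lemma~\ref{lem:size-two atoms HF} to the half-factorial sublattice $\langle E(C) \rangle$, whose pairing graph is $C$: this yields that $C$ contains no path of length~$5$, and that the presence of a $3$-cycle (resp., a $5$-cycle) forces $C = C_3$ (resp., $C = C_5$). It then remains to classify the connected graphs subject to these three constraints. The absence of a path of length~$5$ forces diameter at most~$4$ and excludes every cycle of length~$\ge 6$ (such a cycle would contain a path of length~$5$); together with the two exceptional cases, the only surviving cycle length is~$4$. Hence, away from $C = C_3$ and $C = C_5$, the graph $C$ is either acyclic — a tree of diameter at most~$4$ — or it contains a $4$-cycle and no cycle of any other length — a candy graph. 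This exhausts the four families in~(c).

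For (c) $\Rightarrow$ (b), I would first reformulate factorizations graph-theoretically: a factorization of $X \in \langle E(C) \rangle$ is a minimal set of edges contained in~$X$ whose union is~$X$, that is, an inclusion-minimal edge covering of the induced subgraph $C[X]$, which has no isolated vertices. The edges of a minimal edge covering form a disjoint union of stars (a spanning star forest), so a factorization of length~$\ell$ corresponds to a spanning star forest with exactly $|X| - \ell$ stars; consequently $\langle E(C) \rangle$ is an HFS if and only if, for every induced subgraph of~$C$ without isolated vertices, all spanning star forests have the same number of components. For $C = C_3$ or $C_5$ and for trees of diameter at most~$4$ this is a short finite verification, since the forced leaf edges and (in the cyclic cases) the short odd cycles pin down the star count. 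The substantial case is that of candy graphs, where I would exploit the structural description of Lemma~\ref{lem:candy graphs}: a candy graph has at most two vertices $a, c$ of degree~$\ge 3$, lying at distance~$2$ and sharing precisely the degree-$2$ vertices as common neighbors, with every remaining vertex a leaf of~$a$ or of~$c$. Since each surviving leaf forces its unique incident edge and thereby forces its neighbor ($a$ or~$c$) to be a star center, while every degree-$2$ ``middle'' vertex can only be a leaf of~$a$ or of~$c$, every spanning star forest of a realizable induced subgraph is centered at the at most two surviving high-degree vertices, which pins the number of stars and yields half-factoriality.

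The main obstacle is exactly this last verification for candy graphs: one must show that the freedom in assigning the degree-$2$ middle vertices to $a$ or to~$c$ never alters the total number of stars, and one must separately treat the induced subgraphs in which one or both centers — or all but one middle vertex — are deleted, so that $C[X]$ degenerates into a path, a star, or a single $4$-cycle. I expect the cleanest route is a case analysis driven by which of $a,c$ and how many middle vertices survive in~$X$, using the invariant $|X| - (\text{number of stars})$ throughout; the key structural fact making the count rigid is that the two centers are never adjacent and every middle vertex sees only these two centers, so no spanning star forest can ever use a middle vertex as a star center without being forbidden by an already-forced center.
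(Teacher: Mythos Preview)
Your proposal is correct and shares the paper's overall architecture: the reduction to size-$2$ quarks via Proposition~\ref{prop:isolated atoms HFL}, the use of Lemma~\ref{lem:decomposition lemma}(3) (length-additivity of the bijection) for (a) $\Leftrightarrow$ (b), and the appeal to Lemma~\ref{lem:size-two atoms HF} to drive (b) $\Rightarrow$ (c) all match the paper exactly. Two points of execution differ and are worth flagging. First, your (b) $\Rightarrow$ (c) is cleaner than the paper's: once Lemma~\ref{lem:size-two atoms HF} gives you ``no $P_5$, and any $3$- or $5$-cycle forces $C=C_3$ or $C=C_5$,'' you read off diameter $\le 4$ and the absence of cycles of length $\ne 4$ in the remaining case and invoke the \emph{definition} of a candy graph, whereas the paper re-derives the explicit candy structure at this stage --- work that is redundant with Lemma~\ref{lem:candy graphs}. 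Second, for (c) $\Rightarrow$ (b) the paper argues by direct edge-covering case analysis, computing $\mathsf{L}_{S_X}(X)$ as $\{|E|\}$ or $\{|E|+1\}$ in the tree case and $\{|E|+n\}$ in the candy case (with $E$ the set of leaf edges); your reformulation via minimal edge covers as spanning star forests, together with the invariant $|z|=|X|-(\text{number of stars})$, is a genuine conceptual device the paper does not use. It reduces half-factoriality to ``all spanning star forests of $C[X]$ have the same number of components,'' which organizes the candy-graph verification around the at most two high-degree centers of Lemma~\ref{lem:candy graphs} more transparently. The degenerate subcases you list (one center deleted, a single middle vertex surviving so $C[X]$ is a path with a middle-vertex center, or the bare $4$-cycle) are exactly the ones that need separate handling, and your acknowledgment of them is apt; the paper handles these implicitly by noting that $G_X$ may degenerate to a union of trees of diameter $\le 4$.
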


\begin{proof}
	Since each quark of $S$ that is a singleton must be an isolated quark, by virtue of Proposition~\ref{prop:isolated atoms HFL} we can assume that every quark of $S$ has size~$2$. Set $G := \mathcal{G}_p(S)$.

	(a) $\Rightarrow$ (b): Let $C$ be a connected component of $G$, and let $S_C$ be the Boolean sublattice generated by $E(C)$. As we have seen in the proof of Theorem~\ref{thm:UFS characterization for BS with atom-size at most 2}, the inclusion $\mathsf{Z}_{S_C}(X) \subseteq \mathsf{Z}_S(X)$ holds for all $X \in C$. Therefore we can deduce that $S_C$ is an HFS from the fact that $S$ is an HFS.
	\smallskip
	
	(b) $\Rightarrow$ (a): For each connected component $C$ of $G$, let $S_C$ be the Boolean sublattice generated by the edges of $C$. Fix a nonempty set $X \in S$. In light of part~(2) of Lemma~\ref{lem:decomposition lemma}, we can write $X = X_1 \sqcup \dots \sqcup X_k$, where $X_i \in S_{C_i}$ for every $i \in \ldb 1,k \rdb$, where $C_1, \dots, C_k$ are distinct connected components of $G$. By part~(3) of Lemma~\ref{lem:decomposition lemma}, the assignments $z = z_1 \sqcup \dots \sqcup z_k\mapsto (z_1, \dots, z_k)$, where $z \in \mathsf{Z}_S(X)$ and $z_i := \{A \in z \mid A \le X_i\}$ for every $i \in \ldb 1,k \rdb$, induce a natural bijection from $\mathsf{Z}_S(X)$ to $\mathsf{Z}_{S_{C_1}}(X_1) \times \dots \times \mathsf{Z}_{S_{C_k}}(X_k)$. Thus, for any two factorizations $z := z_1 \sqcup \dots \sqcup z_k$ and $z' := z'_1 \sqcup \dots \sqcup z'_k$ of~$X$ in $S$, we see that $z_i, z'_i \in \mathsf{Z}_{S_{C_i}}(X_i)$ for every $i \in \ldb 1,k \rdb$, and so the fact that $S_{C_1}, \dots, S_{C_k}$ are HFSs guarantees that $|z| = |z'|$. As a consequence, $S$ is an HFS.
	\smallskip
	
	(b) $\Rightarrow$ (c): Let $C$ be a connected component of $G$. We consider the following cases.
	\smallskip
	
	\textsc{Case 1:} $C$ does not contain any cycle. In this case, $C$ is a tree. In addition, it follows from part~(1) of Lemma~\ref{lem:size-two atoms HF} that the diameter of $C$ is at most $4$.
	\smallskip
	
	\textsc{Case 2:} $C$ contains a cycle. By part~(1) of Lemma~\ref{lem:size-two atoms HF}, the component $C$ must contain a cycle of length $3$, $4$, or $5$. If $C$ contains a cycle of length~$3$, then $C = C_3$ by part~(2) of Lemma~\ref{lem:size-two atoms HF}. Similarly, part~(3) of Lemma~\ref{lem:size-two atoms HF} ensures that $C = C_5$ provided that $C$ contains a cycle of length~$5$. Therefore suppose that $C$ has a cycle $C_4$ with length $4$. The notation of the following three paragraphs is consistent with that of Example~\ref{ex:candy graph}.
	
	Let $\{a,b_1\}, \{b_1,c\}, \{c,b_2\}, \{b_2,a\}$ be the edges of $C_4$. If $C = C_4$, then $C$ is a candy graph, and we are done. Thus, we can assume that one of the vertices of $C_4$, namely $a$, has degree strictly greater than $2$ in~$C$. Let $B$ be the set of non-leaf vertices of $C$ that are adjacent to $a$ (in particular, $b_1, b_2 \in B$). Note that every vertex $b \in B$ can only be adjacent to some of the vertices in $V(C_4)$, as otherwise~$C$ would contain a path of length~$5$, namely, $\{b', b\}, \{b,a\}, \{a,b_1\},\{b_1, c\}, \{c, b_2\}$ in case that $b'$ were a vertex in $V(C) \setminus V(C_4)$ adjacent to $b$; however, this is not possible in light of part (1) of Lemma~\ref{lem:size-two atoms HF}. In addition, each vertex of~$C$ adjacent to the vertex $a$ cannot be adjacent to any vertex in $B$ as, in light of part~(2) of Lemma~\ref{lem:size-two atoms HF}, the graph $C$ cannot contain any $3$-cycles. As the vertices in $B$ are not leaves, the last two observations guarantee that no two vertices in~$B$ are adjacent and also that every vertex in $B$ is adjacent to $c$. Thus, we have proved that every non-leaf vertex of $C$ that is adjacent to~$a$ must be adjacent to $c$. By a completely similar argument, we can check that every non-leaf vertex of~$C$ that is adjacent to $c$ must be adjacent to $a$. Hence $B$ is also the set of non-leaf vertices of $C$ that are adjacent to $c$. Because $C$ is connected, in order to conclude that~$C$ is a candy graph, it suffices to argue that $\deg b = 2$ for every $b \in B$. We consider the following two subcases.
	\smallskip
	
	\textsc{Case 2.1:} $|B| = 2$. Since $\deg a \ge 3$, the equality $|B|=2$ ensures the existence of a leaf $a' \in V(C)$ that is adjacent to $a$. In this case, $\deg b_1 = 2$, as otherwise, one could take a vertex $b' \in V(C) \setminus \{a,b_2,c\}$ adjacent to $b_1$ to create a path with length $5$, namely, $\{b',b_1\}, \{b_1, c\}, \{c, b_2\}, \{b_2, a\}, \{a,a'\}$ (observe that $b' \neq a'$ as $a'$ is a leaf), which is not possible in light of part~(1) of Lemma~\ref{lem:size-two atoms HF}. Similarly, we can argue that $\deg b_2 = 2$. Hence $C$ is a candy graph with precisely one cycle of length~$4$.
	\smallskip
	
	\textsc{Case 2.2:} $|B| \ge 3$. 
	Fix a vertex $b \in B$. Observe that $\deg b = 2$ as, otherwise, we could take $b' \in V(C) \setminus \{a,c\}$ adjacent to $b$ and then pick distinct $b_i, b_j \in B \setminus \{b\}$ to form the length-$5$ path $\{b',b\}, \{b,a\}, \{a,b_i\}, \{b_i,c\}, \{c, b_j\}$ (observe that $b' \notin \{b_i,b_j\}$ because it is adjacent to $b$), which is not possible in light of part~(1) of Lemma~\ref{lem:size-two atoms HF}. As a result, $C$ is a candy graph (with $\binom{|B|}{2}$ cycles of length~$4$ in case that $B$ is a finite set).
	\smallskip
	
	(c) $\Rightarrow$ (b): We can fix a connected component and assume, without loss of generality, that it is the whole graph $G := \mathcal{G}_p(S)$; that is, we can assume that $G$ is connected. When $G$ is either a length-$3$ or a length-$5$ cycle, we can explicitly check that $S$ is an HFS (but not a UFS).
	\smallskip
	
	Suppose now that $G$ is a tree with diameter at most $4$. Take $X \in S \setminus \{\hat{0}\}$, and then let $G_X$ be the subgraph of $G$ induced by the vertices in~$X$ and let $S_X$ be the Boolean sublattice generated by the edges of $G_X$. It is clear that $\mathsf{Z}_S(X) = \mathsf{Z}_{S_X}(X)$. In light of parts~(2) and~(3) of Lemma~\ref{lem:decomposition lemma}, we can assume that $G_X$ is connected (see the argument used to prove the implication (b) $\Rightarrow$~(a)). Hence $G_X$ is a finite tree with diameter at most $4$. If $G$ has diameter at most $3$, then $S_X$ is a UFS by Theorem~\ref{thm:UFS characterization for BS with atom-size at most 2}. Thus, we assume that $G_X$ has diameter $4$. Let $\{v_1, v_2\}, \{v_2, v_3\}, \{v_3, v_4\}, \{v_4, v_5\}$ be a length-$4$ path in $G_X$, and let $E$ be the set of edges of $G_X$ that are incident to a leaf. It is clear that~$E$ is contained in each factorization of $X$. We claim that $\mathsf{L}_{S_X}(X) = \{|E|\}$ if $v_3$ is adjacent to a leaf and $\mathsf{L}_{S_X}(X) = \{|E| + 1\}$ otherwise. Assume first that $v_3$ is adjacent to a leaf, and let $z$ be a factorization of~$X$ in $S_X$. Note that, as $v_3$ is adjacent to a leaf, $E$ is an edge covering of $G_X$ and, therefore, $z = E$. Hence $\mathsf{L}_{S_X}(X) = \{|E|\}$. Now assume that $v_3$ is not adjacent to any leaf, and let $z'$ be a factorization of $X$ in $S_X$. In this case, there must be a quark $A \in z'$ such that $v_3 \in A$, in which case, $E \cup \{A\}$ is an edge covering of $G_X$, and so $z' = E \cup \{A\}$. As a consequence, $\mathsf{L}_{S_X}(X) = \{|E| +1\}$. Hence $|\mathsf{L}_S(X)| = |\mathsf{L}_{S_X}(X)| = 1$. As $X$ was arbitrarily picked in $S \setminus \{\hat{0}\}$, we conclude that~$S$ is an HFS.
	\smallskip

	Finally, suppose that $G$ is a candy graph. Take $X \in S \setminus \{\hat{0}\}$, and let $G_X$ and $S_X$ be defined as in the previous paragraph. It suffices to show that $\mathsf{L}_{S_X}(X)$ is a singleton. Observe that $G_X$ is either the disjoint union of trees of diameter at most~$4$ or a candy graph. If $G_X$ is the disjoint union of trees of diameter at most~$4$, then parts~ (2) and~(3) of Lemma~\ref{lem:decomposition lemma}, used in tandem with the conclusion of the previous paragraph, guarantee that $S_X$ is an HFS, and so $\mathsf{L}_{S_X}(X)$ is a singleton. Therefore we assume that $G_X$ is a candy graph. Observe that $G_X$ is a finite graph because the set $V(G_X) = X$ is finite. Once again, we adopt a notation consistent with that in Example~\ref{ex:candy graph}. If $G_X$ is a length-$4$ cycle, then we can explicitly verify that $S_X$ is an HFS, and so $\mathsf{L}_{S_X}(X)$ is a singleton. Therefore we can assume that $G_X$ contains a length-$4$ cycle $\{a,b_1\}, \{b_1, c\}, \{c, b_2\}, \{b_2, a\}$ such that $\deg a \ge 3$. Let $a_1, \dots, a_m$ be the leaves that are adjacent to $a$ and let $b_1, \dots, b_n$ be the non-leaf vertices that are adjacent to~$a$. It follows from part~(2) of Lemma~\ref{lem:candy graphs} that $\deg b_i = 2$ for every $i \in \ldb 1,n \rdb$. In addition, we have seen in the proof of parts~(3) and~(4) of Lemma~\ref{lem:candy graphs} that $b_i$ is adjacent to~$c$ for every $i \in \ldb 1,n \rdb$. Now, it follows from parts~(3) and~(4) of Lemma~\ref{lem:candy graphs} that every vertex adjacent to $c$ that is not in $\{b_1, \dots, b_n \}$ must be a leaf. This, along with the fact that $C$ is connected, guarantees that the set of non-leaf vertices of~$C$ is $\{a,c\} \cup \{b_1, \dots, b_n\}$. Let $E$ be the set of edges of $G_X$ that are incident to a leaf. Now suppose that $z$ is a factorization of $X$ in $S_X$. It is clear that $E \subseteq z$. Since $z$ is a factorization of $X$, it is also a minimal edge covering of $G_X$. Therefore, for each $i \in \ldb 1,n \rdb$, precisely one of the two edges incident to $b_i$ is contained in $z$. This implies that $|z| = |E| + n$, and so $\mathsf{L}_{S_X}(X) = \{|E| + n\}$. Since $|\mathsf{L}_S(X)| = |\mathsf{L}_{S_X}(X)| = 1$ for every $X \in S \setminus \{\hat{0}\}$, we conclude that $S$ is an HFS.
\end{proof}

\bigskip
\section{The Elasticity}
\label{sec:elasticity}

Let $S$ be a factorizable (join) semilattice with $|S| \ge 2$. Borrowing standard notation and terminology from factorization theory, for any $x \in S^\bullet$, we define the \emph{elasticity} $\rho(x)$ of $x$ as follows:
\[
	\rho(x) = \frac{\sup \mathsf{L}(x)}{\min \mathsf{L}(x)}.
\]
Then we call $\rho(S) := \sup\{\rho(x) \mid x \in S^\bullet \}$ the \emph{elasticity} of $S$. Observe that $S$ is an HFS if and only if $\rho(S) = 1$. By definition, $\rho(x) \in \qq_{\ge 1} \cup \{\infty\}$ for all $x \in S^\bullet$. In a similar way, we can define elasticities for other commutative algebraic structures, specially atomic monoids and rings.

Even though the elasticity of every element in a given algebraic structure (e.g., monoid, integral domain, or lattice) is either rational or infinite by definition, the elasticity of the whole structure can be irrational. There are, however, several natural classes of atomic algebraic structures whose members have either rational or infinite elasticities. For instance, the elasticity of any numerical monoid is rational~\cite[Theorem~2.1]{CHM06}, and the elasticity of any ring of integers of an algebraic number field is also rational \cite[Theorem~10]{AACS93}. In addition, the elasticity of any one-dimensional local integral domain is either rational or infinite \cite[Theorem~2.12]{AA92}. It is also known that the elasticity of every (additive) submonoid of $\nn_0^2$ is either rational or infinite \cite[Theorem~5.7]{fG20} (however, it is still unknown whether the same situation happens for all submonoids of any finite-rank free commutative monoid; see, for instance, \cite{sT17} and \cite[Question~5.1]{fG20}).

On the other hand, there are natural classes of atomic algebraic structures whose members can realize any prescribed possible irrational elasticity. Indeed, it was proved in \cite[Theorem~3.2]{AA92} and then in \cite[Proposition~3.5]{CHM06} that for every $\alpha \in \rr_{\ge 1} \cup \{\infty\}$, there exists a Dedekind domain $D$ (with torsion class group) such that $\rho(D) = \alpha$. Also, we can use the formula given in \cite[Theorem~3.2]{GO20} to argue that for each irrational $\alpha \in \rr_{\ge 1} \cup \infty$ there exists an atomic additive submonoid~$M$ of $\qq_{\ge 0}$ with $\rho(M) = \alpha$ (see also \cite[Example~4.3]{GGT21}). It turns out that the class consisting of all Boolean sublattices satisfies a similar property, which we will prove in the next theorem.

\begin{lemma} \label{lem:elasticity aux}
	Let $a_1, \dots, a_k$ and $b_1, \dots, b_k$ be positive integers such that $1 < \frac{a_1}{b_1} \le \dots \le \frac{a_k}{b_k}$. Then
	\[
		\frac{a_1 + \dots + a_k}{b_1 + \dots + b_k} \le \frac{a_k}{b_k}.
	\]
\end{lemma}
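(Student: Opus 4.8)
The plan is to reduce the claimed inequality to a sum of $k$ elementary cross-multiplied inequalities, exploiting that $\frac{a_k}{b_k}$ is the largest among the given ratios. Since all the $b_i$ are positive integers, the chain $1 < \frac{a_1}{b_1} \le \dots \le \frac{a_k}{b_k}$ gives in particular $\frac{a_i}{b_i} \le \frac{a_k}{b_k}$ for every $i \in \ldb 1,k \rdb$. The first step is to clear denominators in each of these: because $b_i > 0$ and $b_k > 0$, the inequality $\frac{a_i}{b_i} \le \frac{a_k}{b_k}$ is equivalent to $a_i b_k \le a_k b_i$.

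Next I would sum these $k$ inequalities over $i$ from $1$ to $k$. This yields
\[
	b_k \sum_{i=1}^k a_i \;=\; \sum_{i=1}^k a_i b_k \;\le\; \sum_{i=1}^k a_k b_i \;=\; a_k \sum_{i=1}^k b_i.
\]
Finally, since $b_k > 0$ and $\sum_{i=1}^k b_i > 0$ (each summand being a positive integer), I can divide both sides of the last inequality by the positive quantity $b_k \big(\sum_{i=1}^k b_i\big)$ to obtain
\[
	\frac{a_1 + \dots + a_k}{b_1 + \dots + b_k} \;\le\; \frac{a_k}{b_k},
\]
which is exactly the desired conclusion.

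There is essentially no hard step here: the argument is just the standard fact that a weighted mediant of several fractions cannot exceed the largest of them. The only point requiring the smallest amount of care is the sign bookkeeping when clearing denominators and when dividing at the end, but positivity of every $b_i$ makes all of these manipulations valid without reversing inequalities. Note that the hypothesis $1 < \frac{a_1}{b_1}$ plays no role in this particular bound; only the upper end $\frac{a_k}{b_k}$ of the chain is used, so the lower bound $1$ is presumably relevant only to how the lemma is later applied.
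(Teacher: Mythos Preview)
Your proof is correct and, if anything, cleaner than the paper's. The paper argues by induction on $n \in \ldb 1,k \rdb$ that $\frac{a_1 + \dots + a_n}{b_1 + \dots + b_n} \le \frac{a_n}{b_n}$, using the inductive hypothesis together with $\frac{a_n}{b_n} \le \frac{a_{n+1}}{b_{n+1}}$ to pass from $n$ to $n+1$ via a single cross-multiplication. You instead go straight for $n=k$: from $\frac{a_i}{b_i} \le \frac{a_k}{b_k}$ for every $i$ you get $a_i b_k \le a_k b_i$, sum, and divide. Your route avoids the induction altogether and makes transparent that only the maximality of $\frac{a_k}{b_k}$ is used; the paper's induction implicitly proves the slightly stronger statement that every partial mediant is bounded by its last ratio, which is not needed downstream. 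Your closing remark that the hypothesis $1 < \frac{a_1}{b_1}$ is irrelevant to this lemma is also correct.
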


\begin{proof}
	Let us prove by induction on $n$ that
	\begin{equation} \label{eq:inequiality for elasticity}
		\frac{a_1 + \dots + a_n}{b_1 + \dots + b_n} \le \frac{a_n}{b_n}
	\end{equation}
for every $n \in \ldb 1,k \rdb$. For $n = 1$, we see that \eqref{eq:inequiality for elasticity} holds trivially. In addition, if \eqref{eq:inequiality for elasticity} holds for some $n < k$, then $\frac{a_1 + \dots + a_n}{b_1 + \dots + b_n} \le \frac{a_{n+1}}{b_{n+1}}$ and, therefore, $(a_1 + \dots + a_{n+1})b_{n+1} \le (b_1 + \dots + b_{n+1}) a_{n+1}$, which means that
\[
	\frac{a_1 + \dots + a_{n+1}}{b_1 + \dots + b_{n+1}} \le \frac{a_{n+1}}{b_{n+1}}.
\]
\end{proof}

We proceed to prove the main result of this section.

\begin{theorem} \label{thm:elasticity of sublattices of B_N}
	For any $\alpha \in \rr_{\ge 1} \cup \{\infty\}$, there exists a Boolean sublattice $S$ with $\rho(S) = \alpha$.
\end{theorem}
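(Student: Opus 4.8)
The plan is to realize each target elasticity value $\alpha$ by constructing an explicit Boolean sublattice whose quarks interact in a controlled way, separating the argument into three regimes: the rational case $\alpha = p/q$, the irrational case $\alpha \in \rr_{>1}$, and the infinite case $\alpha = \infty$. The key design principle is that a single ``core'' element $X$ should admit exactly two extremal factorizations—one short and one long—whose length ratio equals $\alpha$, while all other elements have smaller (or equal) elasticity so that the supremum $\rho(S)$ is attained precisely at $X$. The simplest building block is the length-$3$ cycle and its generalizations: in Example~\ref{ex:HFS-FFS not LFS-UFS} the element $123$ already has multiple factorizations, and by enlarging quarks one can force factorizations of genuinely different lengths.

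For the rational case $\alpha = p/q > 1$ with $\gcd(p,q)=1$, I would take a single element $X$ that can be covered either by $q$ ``large'' pairwise-disjoint quarks whose union is $X$, or by $p$ ``small'' quarks whose union is also $X$; concretely, partition a finite set $X$ into $q$ blocks to get the short factorization of length $q$, and independently into $p$ blocks to get the long factorization of length $p$, choosing the two partitions to be mutually refining-free so that no proper subcollection of either already joins to $X$ (this irredundance is what makes them genuine factorizations). One must then verify $\min \mathsf{L}(X) = q$ and $\sup \mathsf{L}(X) = p$, and that no intermediate element has larger elasticity; the latter is where Lemma~\ref{lem:elasticity aux} enters, bounding the elasticity of a joined element by the maximum of the elasticities of its pieces via part~(3) of Lemma~\ref{lem:decomposition lemma}.

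For the irrational case, the natural move is to take a countable family of rational approximations $p_n/q_n \nearrow \alpha$ (or a family of disjoint finite gadgets, one for each $n$), place the corresponding elasticity-realizing gadgets in disjoint connected components of the quarkic graph, and invoke the decomposition lemma to conclude that $\rho(S) = \sup_n \rho_n = \sup_n p_n/q_n = \alpha$. The disjointness ensures, via parts~(2)--(3) of Lemma~\ref{lem:decomposition lemma}, that factorizations of any element factor as products over components, so no element's elasticity exceeds the maximum over the components it touches, and hence the supremum over all elements is exactly $\sup_n \rho_n$. The infinite case $\alpha = \infty$ is handled analogously by choosing gadgets with $\rho_n \to \infty$ (for instance, elements admitting a fixed short factorization and an arbitrarily long one), or by exhibiting a single element whose set of lengths is unbounded.

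The main obstacle will be the second half of each construction: verifying that the supremum is not accidentally exceeded by some element that is not one of the designed ``core'' elements. Producing \emph{some} element of elasticity $\alpha$ is easy; ensuring that $\rho(S)=\alpha$ rather than something larger requires controlling the lengths of \emph{every} factorization of \emph{every} element. Here the disjoint-component strategy together with Lemma~\ref{lem:elasticity aux} does the heavy lifting: since an arbitrary element decomposes as $X = X_1 \sqcup \dots \sqcup X_k$ across distinct components and its length sets multiply (add, on the level of lengths), one shows
\[
	\rho(X) = \frac{\sum_i \sup \mathsf{L}(X_i)}{\sum_i \min \mathsf{L}(X_i)} \le \max_i \frac{\sup \mathsf{L}(X_i)}{\min \mathsf{L}(X_i)} = \max_i \rho(X_i),
\]
so that $\rho(S) = \sup_i \rho(\langle V(C_i)\rangle)$ reduces to the per-component analysis. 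I expect the bulk of the remaining work to be the explicit combinatorial verification, inside a single component, that the designed short and long factorizations are indeed the minimum-length and maximum-length ones and that no element of that component beats the target ratio.
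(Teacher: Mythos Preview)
Your proposal is correct and matches the paper's approach almost exactly: the two ``mutually refining-free'' partitions of a finite set are precisely the rows and columns of the $a_n \times b_n$ arrays the paper uses, disjoint copies with ratios $q_n \nearrow \alpha$ handle all $\alpha > 1$, and the mediant inequality (Lemma~\ref{lem:elasticity aux}) together with the component decomposition from Lemma~\ref{lem:decomposition lemma} gives the upper bound just as you outline. The only difference is cosmetic: the paper treats every $\alpha > 1$ uniformly via the sequence construction rather than singling out rational $\alpha$, which incidentally sidesteps the degenerate $q=1$ case in your single-gadget plan (the paper simply imposes $\min\{a_n,b_n\}\ge 2$ throughout and proves directly that every proper element of a gadget has a unique factorization, so the per-component maximum elasticity is exactly $a_n/b_n$).
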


\begin{proof}
	If $\alpha = 1$, then it suffices to take $S = B_\nn$; this is because $B_\nn$ is a UFS and so an HFS. Therefore we will assume that $\alpha > 1$. Take a strictly increasing sequence $(q_n)_{n \ge 1}$ of positive rationals such that $\lim_{n \to \infty} q_n = \alpha$, and write $q_n = a_n/b_n$, where $a_n, b_n \in \nn$ and $\gcd(a_n, b_n) = 1$. As $\alpha > 1$, after dropping a finite number of initial terms from the sequence $(q_n)_{n \ge 1}$, we can assume that $1 < q_1$ and also that $\min\{a_n, b_n\} \ge 2$ for every $n \in \nn$. Now let $(J_n)_{n \ge 1}$ be a sequence of pairwise disjoint subsets of $\nn$ satisfying that $|J_n| = a_n b_n$ for every $n \in \nn$. Now for each $n \in \nn$, we create two distinct partitions~$\alpha_n$ and $\beta_n$ of $J_n$ as follows: place the elements of $J_n$ as entries of an $a_n \times b_n$ matrix $M_n$, and let $A_{n,i}$ (resp., $B_{n,j}$) be the set consisting of all the entries in the $i$-th row (resp., $j$-th column) for every $i \in \ldb 1, a_n \rdb$ (resp., $j \in \ldb 1, b_n \rdb$). Hence both $\alpha_n := \{A_{n,i} \mid i \in \ldb 1, a_n \rdb \}$ and $\beta_n := \{B_{n,j} \mid j \in \ldb 1, b_n \rdb \}$ are partitions of $J_n$ with sizes $a_n$ and $b_n$, respectively.
	
	Now consider the Boolean sublattice $S$ generated by the set
	\[
		\mathcal{A} := \big\{ A_{n,i}, B_{n,j} \mid n \in \nn \ \text{and} \ (i,j) \in \ldb 1, a_n \rdb \times \ldb 1, b_n \rdb \big\}.
	\]
	For each $n \in \nn$, the fact that $\alpha_n$ is a partition of $J_n$ implies that $J_n \in S$. Before proving that the elasticity of $S$ is $\alpha$, we need to argue some facts about quarks and factorizations in~$S$.
	\medskip
	
	\noindent \textit{Claim 1.} $S$ is factorizable with $\mathcal{A}(S) = \mathcal{A}$.
	\smallskip
	
	\noindent \textit{Proof of Claim 1.} It suffices to show that $\mathcal{A}(S) = \mathcal{A}$. It is clear that $\mathcal{A}(S) \subseteq \mathcal{A}$. For the reverse inclusion, first note that for each $n \in \nn$, the fact that $\min \{a_n, b_n\} \ge 2$ guarantees that no row (resp., column) of the matrix $M_n$ is contained in a column (resp., row) of $M_n$, and so $A_{n,i}$ and $B_{n,j}$ are not comparable in $S$ for any $(i,j) \in \ldb 1,a_n \rdb \times \ldb 1, b_n \rdb$. On the other hand, for any $m,n \in \nn$ with $m \neq n$ the blocks of the partitions $\alpha_m$ and $\beta_m$ are not comparable in $S$ with the blocks of the partitions $\alpha_n$ and~$\beta_n$ as the sets of entries of the matrices $M_m$ and $M_n$ are disjoint. Hence $\mathcal{A} \subseteq \mathcal{A}(S)$, and so Claim~1 is established.
	\medskip

	\noindent \textit{Claim 2.} For each $n \in \nn$, the following statements hold.
	\begin{enumerate}
		\item $\Lambda_{J_n} = \langle A_{n,i}, B_{n,j} \mid (i,j) \in \ldb 1, a_n \rdb \times \ldb 1,b_n \rdb \rangle$.
		\smallskip
		
		\item $|\mathsf{Z}(J)| = 1$ for every $J \in \Lambda_{J_n} \setminus \{J_n\}$.
		\smallskip
		
		\item $|\mathsf{Z}(J_n)| = 2$ and $\mathsf{L}(J_n) = \{a_n, b_n\}$.
	\end{enumerate}
	\smallskip
	
	\noindent \textit{Proof of Claim 2.} Fix $n \in \nn$.
	\smallskip
	
	(1) Since $A_{n,i} \le J_n$ and $B_{n,j} \le J_n$ for each pair $(i,j) \in \ldb 1, a_n \rdb \times \ldb 1, b_n \rdb$, we see that the inclusion $\langle A_{n,i}, B_{n,j} \mid (i,j) \in \ldb 1, a_n \rdb \times \ldb 1,b_n \rdb \rangle \subseteq \Lambda_{J_n}$ holds. For the reverse inclusion, it suffices to observe that if $J \in S$ with $J \subseteq J_n$, then Claim~1 ensures that $J$ is the union of some of the blocks of~$\alpha_n$ or $\beta_n$.
	\smallskip
	
	(2) Take $J \in \Lambda_{J_n} \setminus \{J_n\}$. As $J \subseteq J_n$, the set $J$ consists of some entries of the matrix $M_n$. Let~$z_1$ and $z_2$ be two factorizations of $J$ in $S$. The quarks appearing in $z_1$ or $z_2$ are some of the blocks of~$\alpha_n$ or $\beta_n$. Suppose that $A_{n,i} \in z_1$ for some $i \in \ldb 1, a_n \rdb$. Then $J$ contains all the entries in the $i$-th row of $M_n$ and so if $A_{n,i} \notin z_2$, then $B_{n,j} \subseteq J$ for all $j \in \ldb 1, b_n \rdb$, which is not possible because $J$ is strictly contained in $J_n$. Hence $A_{n,i} \in z_2$. In a completely similar way, one can check that every quark $A_{n,i}$ contained in $z_2$ must be contained in $z_1$. The quarks of the form $B_{n,j}$ can be treated \emph{mutatis mutandis}, and so we can conclude that $z_1 = z_2$. Thus, $|\mathsf{Z}(J)| = 1$.
	\smallskip
	
	(3) Now suppose that $z$ is a factorization of $J_n$ in $S$ containing the quark $A_{n,i}$ for some $i \in \ldb 1, a_n \rdb$. Suppose, by way of contradiction, that $A_{n,k}$ is not contained in $z$ for some $k \in \ldb 1, a_n \rdb$. Since $J_n$ contains each entry of the matrix $M_n$ and, in particular, each entry of the $k$-th row of $M_n$, the factorization $z$ must contain the quark $B_{n,j}$ for every $j \in \ldb 1, b_n \rdb$. This, along with the fact that $A_{n,i} \le B_{n,1} \vee \dots \vee B_{n, b_n}$ in $S$, contradicts that $z$ is a factorization. Thus, $z = \{A_{n,1}, \dots, A_{n, a_n} \}$. In a similar way, we can argue that if $z'$ is a factorization of $J_n$ containing the quark $B_{n,j}$ for some $j \in \ldb 1, b_n \rdb$, then $z' = \{ B_{n,1}, \dots, B_{n, b_n} \}$. Hence $|\mathsf{Z}(J_n)| = 2$ and $\mathsf{L}(J_n) = \{a_n, b_n\}$, which concludes our argument for Claim~2.
	\medskip

	\noindent \textit{Claim 3.} $J \in S$ has a unique factorization if and only if $J_n \not\subseteq J$ for any $n \in \nn$.
	\smallskip
	
	\noindent \textit{Proof of Claim 3.} For the direct implication, suppose that $J_n \subseteq J$ for some $n \in \nn$. Let $\{A_1, \dots, A_\ell\}$ be a factorization of $J$ and assume, without loss of generality, that there exists $k \in \ldb 1, \ell \rdb$ such that $A_i \le J_n$ if and only if $i \in \ldb 1, k \rdb$. The fact that $J_n \le J$, along with the disjointness of the $J_i$s, guarantees that the join of $\{A_1, \dots, A_k\}$ is $J_n$. As a result, $\{A_{n,1}, \dots, A_{n, a_n} \} \cup \{ A_{k+1}, \dots, A_\ell \}$ and $\{ B_{n,1}, \dots, B_{n, b_n} \} \cup \{ A_{k+1}, \dots, A_\ell \}$ are two distinct factorizations of~$J$.
	
	For the converse, assume that $J_n \not\subseteq J$ for any $n \in \nn$. Suppose that $z := \{A_1, \dots, A_\ell \}$ and $z' := \{A'_1, \dots, A'_m\}$ are two factorizations of $J$ in $S$. Now fix $n \in \nn$, and then set
	\[
		z_n := \{A_i \mid i \in \ldb 1, \ell \rdb \text{ and } A_i \le J_n \} \quad \text{and} \quad z'_n := \{A'_j \mid j \in \ldb 1, m \rdb \text{ and } A'_j \le J_n \}.
	\]
	Set $X = \vee z_n$. Clearly, $X \subseteq J_n$. Observe that $z_n$ and $z'_n$ are two factorizations of $X$ in $S$.  Since $J_n \not\subseteq J$, the set~$X$ is strictly contained in $J_n$. Then $X \in \Lambda_{J_n} \setminus \{J_n\}$, and so $|\mathsf{Z}(X)| = 1$ by part~(2) of Claim~2. Thus, $z_n = z'_n$. After proceeding similarly for every $n \in \nn$, we can conclude $z = z'$. Claim~3 is then established.
	\medskip
	
	\noindent \textit{Claim 4.} For each $X \in S$, there exist unique $n_1, \dots, n_k \in \nn$ with $n_1 < \dots < n_k$ and a unique $J \in S$ disjoint from $J_{n_1} \cup \dots \cup J_{n_k}$ such that $X = J_{n_1} \vee \dots \vee J_{n_k} \vee J$ and $J_n \not\subseteq J$ for any $n \in \nn$.
	\smallskip
	
	\noindent \textit{Proof of Claim 4.} For the existence of such a decomposition, take $n_1, \dots, n_k \in \nn$ with $n_1 < \dots < n_k$ such that $n \in \{n_1, \dots, n_k\}$ if and only if $J_n \subseteq X$, and then set $J := X \setminus (J_{n_1} \cup \dots \cup J_{n_k})$. It is clear that $X = J_{n_1} \vee \dots \vee J_{n_k} \vee J$ and also that $J_n \not\subseteq J$ for any $n \in \nn$. For the uniqueness, first note that because $J_n \not\subseteq J$ for any $n \in \nn$, the equality $\{n_1, \dots, n_k\} = \{ n \in \nn \mid J_n \subseteq J\}$ holds. Therefore $n_1, \dots, n_k$ are uniquely determined. Finally, the disjointness between $J$ and $J_{n_1} \cup \dots \cup J_{n_k}$, in tandem with the equality $X = J_{n_1} \vee \dots \vee J_{n_k} \vee J$, guarantees that $J =  X \setminus (J_{n_1} \cup \dots \cup J_{n_k})$. Hence Claim~4 is established.
	\smallskip
	
 	Finally, let us come back to prove that $\rho(S) = \alpha$. To argue that $\rho(S) \ge \alpha$, it suffices to observe that, by virtue of part~(3) of Claim~2, the equality $\rho(J_n) = q_n$ holds for every $n \in \nn$ and, as a result,
	\[
		\rho(S) = \sup \{\rho(X) \mid X \in S^\bullet\} \ge \sup \{\rho(J_n) \mid n \in \nn \} = \lim_{n \to \infty} q_n = \alpha.
	\]
	Let us argue now the reverse inequality, namely, $\rho(S) \le \alpha$. To do so, fix $X \in S$. By Claim~4, there exist unique $n_1, \dots, n_k \in \nn$ with $n_1 < \dots < n_k$ and a unique $J \in S$ disjoint from $J_{n_1} \cup \dots \cup J_{n_k}$ such that $X = J_{n_1} \vee \dots \vee J_{n_k} \vee J$ and $J_n \not\subseteq J$ for any $n \in \nn$. Claim~3 now guarantees that $J$ has a unique factorization $\omega$ in $S$. Now take $z \in \mathsf{Z}(X)$, and set
	\[
		z_i := \{A \in z \mid A \le J_{n_i}\}
	\]
	for each $i \in \ldb 1,k \rdb$. For each  $i \in \ldb 1,k \rdb$, the equality $\vee \{A \mid A \in z_i\} = J_{n_i}$ means that $z_i$ is a factorization of $J_{n_i}$. Since $J = \vee \{A \in z \mid A \not\leq J_{n_i} \text{ for any } i \in \ldb 1,k \rdb \}$, it follows that
	\[
		\omega = \{A \in z \mid A \not\leq J_{n_i} \text{ for any } i \in \ldb 1,k \rdb \}.
	\]
	As a result, any factorization of $X$ in $S$ has the form $z_1 \cup \dots \cup z_k \cup  \omega$, where $z_i$ is a factorization of~$J_i$ in $S$ for every $i \in \ldb 1,k \rdb$. Therefore, after setting $Y := J_{n_1} \vee \dots \vee J_{n_k}$, we see that
	\[
		\rho(X) = \frac{\sup \mathsf{L}(X)}{\min \mathsf{L}(X)} = \frac{\sup \big\{ |\omega| + \mathsf{L}(Y )\big\} }{\min \big\{ |\omega| + \mathsf{L}(Y) \big\} } = \frac{|\omega| + \sup \big\{\mathsf{L}(Y )\big\} }{|\omega| + \min \big\{ \mathsf{L}(Y) \big\} } \le \rho(Y).
	\]
	In addition, for each $i \in \ldb 1, k \rdb$, since $|z_i| \in \mathsf{L}(J_i)$, part~(3) of Claim~2 ensures that $b_{n_i} \le |z_i| \le a_{n_i}$. As a consequence,
	\[
		\rho(Y) = \frac{\sup \mathsf{L}(Y)}{\min \mathsf{L}(Y)} \le \frac{a_{n_1} + \dots + a_{n_k}}{b_{n_1} + \dots + b_{n_k}} \le q_{n_k},
	\]
	where the last inequality follows from Lemma~\ref{lem:elasticity aux}. As a result, $\rho(X) \le \rho(Y) \le q_{n_k} < \alpha$. Finally, we can take the supremum of the set $\{\rho(X) \mid X \in S^\bullet\}$ to obtain that $\rho(S) \le \alpha$.
\end{proof}

\bigskip
\section{The Length-Factorial Property}
\label{sec:length-factoriality}

This subsection is devoted to the length-factorial property. First, we prove that for Boolean sublattices whose quarks have size at most~$2$, being an LFS and being a UFS are equivalent conditions, and then we will exhibit an example of an LFS that is not a UFS. For cancellative commutative monoids, it was proved in~\cite[Proposition~3.1]{BVZ23} that the property of being an LFM implies that of being an FFM. We conclude this paper constructing an LFS that is not an FFS.
\smallskip

Recall that the set of isolated quarks of a Boolean sublattice $S$ is denoted by $\mathcal{A}_I(S)$. It follows directly from part~(4) of Lemma~\ref{lem:decomposition lemma} that a Boolean sublattice is an LFS if and only if the sublattice generated by its set of non-isolated quarks is an LFS. We record this observation as the following proposition.

\begin{prop} \label{prop:isolated atoms LFL}
	A Boolean sublattice $S$ is an LFS if and only if $\langle \mathcal{A}(S) \setminus \mathcal{A}_I(S) \rangle$ is an LFS.
\end{prop}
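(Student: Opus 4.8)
The plan is to deduce this proposition directly from part~(4) of Lemma~\ref{lem:decomposition lemma}, which describes precisely how adjoining an isolated quark affects the set of factorizations. Let me write $S' := \langle \mathcal{A}(S) \setminus \mathcal{A}_I(S) \rangle$. The key observation is that every element $X \in S$ decomposes canonically as $X = X' \sqcup \bigsqcup_{A \in \mathcal{A}_I(S),\, A \le X} A$, where $X' \in S'$ collects the ``non-isolated part,'' and that each isolated quark $A \le X$ must appear in \emph{every} factorization of $X$ (since $A$, being disjoint from all other quarks, cannot be covered by any union of the remaining quarks). First I would make this precise: iterating part~(4) of Lemma~\ref{lem:decomposition lemma} over the finitely many isolated quarks lying below~$X$ yields a length-preserving-up-to-constant bijection $\mathsf{Z}_{S'}(X') \to \mathsf{Z}_S(X)$ given by $z \mapsto z \cup \{A \in \mathcal{A}_I(S) \mid A \le X\}$. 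Writing $r$ for the (fixed) number of isolated quarks below~$X$, this bijection increases every length by exactly~$r$.

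Next I would exploit the fact that the LFS property is about distinguishing factorizations of the \emph{same} element by their lengths. Because the bijection above adds the same constant~$r$ to all lengths, two factorizations $z_1, z_2 \in \mathsf{Z}_S(X)$ have equal length if and only if their preimages in $\mathsf{Z}_{S'}(X')$ have equal length; moreover $z_1 = z_2$ iff the preimages coincide. Hence $X$ admits two distinct factorizations of the same length in~$S$ if and only if $X'$ admits two distinct factorizations of the same length in~$S'$. Since $X \mapsto X'$ ranges over all of $S' \setminus \{\hat 0\}$ as $X$ ranges over $S \setminus \{\hat 0\}$ (every element of $S'$ is itself an element of $S$ with empty isolated part), the failure of length-factoriality in~$S$ is witnessed at some element precisely when it is witnessed in~$S'$.

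Putting the two directions together gives the biconditional: $S$ is an LFS if and only if $S'$ is an LFS. I would organize the write-up as a short proof that first invokes part~(4) of Lemma~\ref{lem:decomposition lemma} to establish the length-shifting bijection, and then translates the LFS condition across it. The main (and only genuine) obstacle is purely bookkeeping: confirming that applying part~(4) repeatedly is legitimate, i.e.\ that after removing one isolated quark the remaining structure is still a factorizable Boolean sublattice in which the other quarks remain isolated, so that the hypotheses of Lemma~\ref{lem:decomposition lemma} continue to hold at each step. This is routine because isolatedness of a quark is an intrinsic disjointness condition unaffected by deleting \emph{other} isolated quarks. Since the paper explicitly states that the proof ``immediately follows'' from part~(4), I expect the final argument to be a single compact paragraph rather than anything requiring delicate estimates.
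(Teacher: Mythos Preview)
Your proposal is correct and follows exactly the route the paper takes: the paper does not even supply a separate proof environment, merely remarking that the result ``follows directly from part~(4) of Lemma~\ref{lem:decomposition lemma},'' which is precisely the length-shifting bijection you spell out. Your extra care in noting that $\mathsf{Z}_S(X')=\mathsf{Z}_{S'}(X')$ (since no isolated quark can lie below an element of $S'$) and that the iteration of part~(4) can be carried out entirely inside $S$ is more detail than the paper provides, but it is the same argument.
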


In the class consisting of integral domains, it is well-known that satisfying the length-factorial property is equivalent to being a UFD (see \cite[Corollary~2.11]{CS11}). However, in the class consisting of all Boolean sublattices, being an LFS does not imply being a UFS. The following example sheds some light upon this observation.

\begin{example} \label{ex:LFL not UFL}
	Consider the Boolean sublattice $S$ generated by the set $\{ 123, 456, 14, 25, 36 \}$. It is clear that $\mathcal{A}(S) = \{ 123, 456, 14, 25, 36 \}$. From the Hasse diagram of $S$, which is illustrated in Figure~\ref{fig:LFS not UFS}, we can verify that every element of $S \setminus \{\hat{0}\}$ has a unique factorization except the element $123456$, which has exactly two factorizations, namely, $z_1 := \{ 123, 456 \}$ and $z_2 := \{ 14, 25, 36 \}$. Since $|z_1| \neq |z_2|$, we conclude that~$S$ is an LFS that is not a UFS.
	\begin{figure}[h]
		\includegraphics[width=14cm]{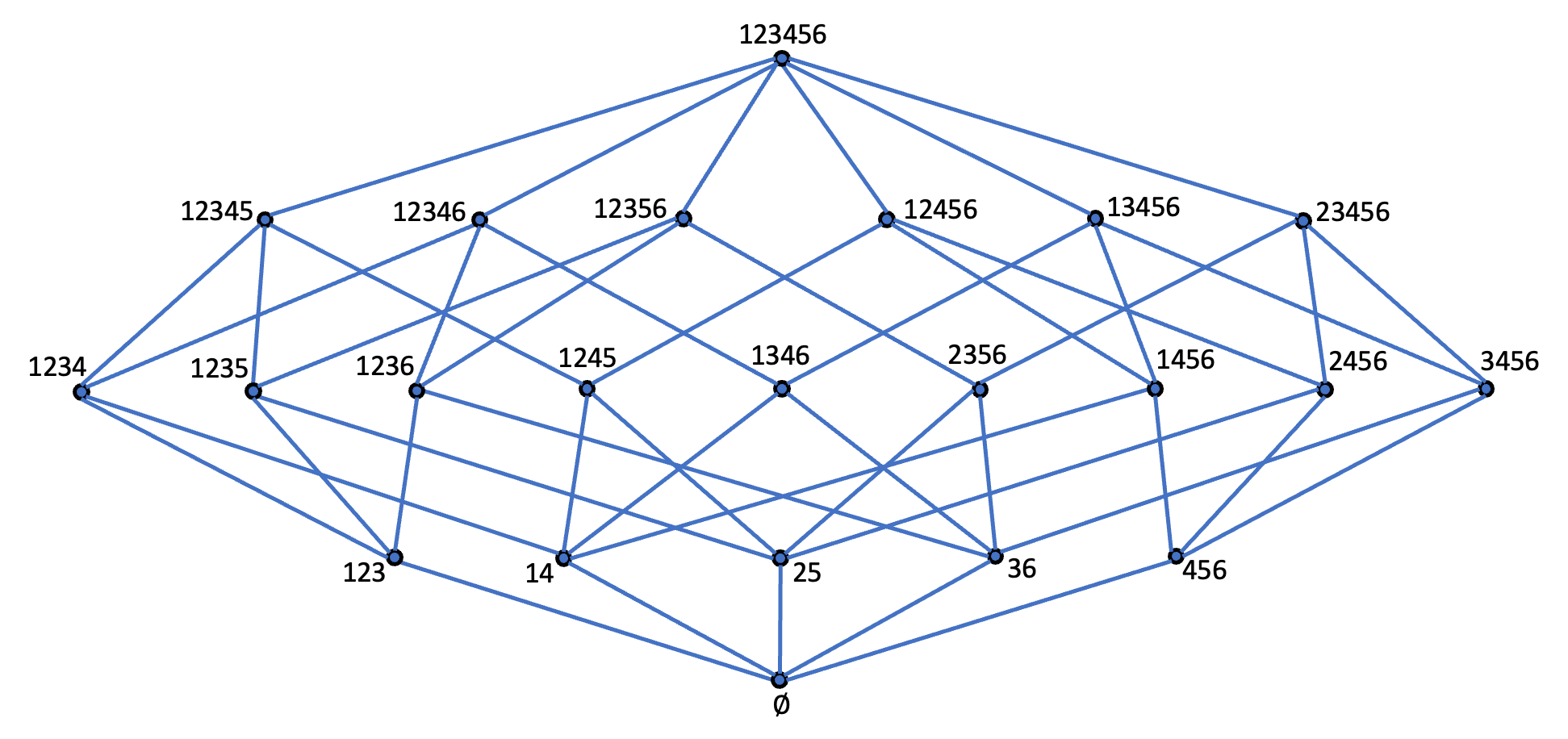}
		\caption{Hasse diagram of the Boolean sublattice generated by the set $\{ 123, 456, 14, 25, 36 \}$.}
		\label{fig:LFS not UFS}
	\end{figure}
\end{example}

Although we have seen in Example~\ref{ex:LFL not UFL} that the property of being an LFS and that of being a UFS are not equivalent in the whole class of Boolean sublattices, they are equivalent in the subclass consisting of all Boolean sublattices whose quarks have size at most~$2$. We present this result in the next proposition.

\begin{prop}
	A Boolean sublattice whose quarks have size at most~$2$ is an LFS if and only if it is a UFS.
\end{prop}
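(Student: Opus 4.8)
The plan is to prove the two implications separately, disposing of the easy one first. Since the preliminaries record that a semilattice is a UFS if and only if it is simultaneously an HFS and an LFS, every UFS is automatically an LFS, which settles one direction immediately. For the converse, suppose that $S$ is an LFS (and, as throughout the paper, factorizable; note that the equivalence fails for non-factorizable $S$, since such an $S$ can never be a UFS yet may be vacuously length-factorial). By Proposition~\ref{prop:isolated atoms LFL} together with Corollary~\ref{cor:isolated atoms UFL}, replacing $S$ by $\langle \mathcal{A}(S) \setminus \mathcal{A}_I(S) \rangle$ alters neither the LFS nor the UFS status, so I may assume $S$ has no isolated quarks. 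Every singleton quark $\{a\}$ is isolated, because a size-$2$ quark $B$ with $a \in B$ would satisfy $\hat{0} < \{a\} < B$, contradicting $\hat{0} \lessdot B$; hence after this reduction every quark has size exactly $2$, and the pairing graph $\mathcal{G}_p(S)$ is available.

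The strategy is then to invoke Theorem~\ref{thm:UFS characterization for BS with atom-size at most 2}, for which it suffices to prove that each connected component of $G := \mathcal{G}_p(S)$ is a tree of diameter at most $3$. The key observation is that the two factorizations exhibited in the proof of Lemma~\ref{lem:size-two atoms LF} for a $3$-cycle, for a length-$4$ cycle, and for a length-$4$ path all have \emph{equal} length (namely $2$, $2$, and $3$, respectively). Thus each such configuration produces two distinct factorizations of one element of $S$ having the same length, which an LFS forbids. Consequently $G$ can contain no $3$-cycle, no $4$-cycle, and no length-$4$ path. I would stress here that these conclusions are insensitive to whatever additional quarks $S$ may possess, since the displayed factorizations involve only the edges of the configuration itself, so each remains a valid irredundant factorization of the element it spans.

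It then remains a purely graph-theoretic deduction that the absence of these three configurations forces every component of $G$ to be a tree of diameter at most $3$. First, each component is acyclic: a shortest cycle of length $3$ or $4$ is excluded outright, while a shortest cycle of length $k \ge 5$ contains a length-$4$ path on five consecutive vertices, again excluded. Second, an acyclic component of diameter at least $4$ would contain a length-$4$ path, obtained as the first five vertices of a diametral path, which is likewise excluded; hence every component is a tree of diameter at most $3$. Applying Theorem~\ref{thm:UFS characterization for BS with atom-size at most 2} concludes that $S$ is a UFS.

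I expect the only real subtlety to lie in the opening reduction—confirming that discarding isolated quarks and singletons is legitimate and leaves a Boolean sublattice all of whose quarks have size exactly $2$—and in stating cleanly that two distinct equal-length factorizations contradict length-factoriality. The remaining graph-theoretic step is routine, since every forbidden longer cycle or longer path collapses to a length-$4$ path on an appropriate five-vertex window, and the heart of the argument is simply the remark that the same configurations ruling out the unique factorization property in Lemma~\ref{lem:size-two atoms LF} happen to supply factorizations of a \emph{single} length.
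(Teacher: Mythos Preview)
Your proposal is correct and follows essentially the same route as the paper's proof: reduce to the case where every quark has size exactly~$2$, observe that the pairs of factorizations exhibited in Lemma~\ref{lem:size-two atoms LF} for a $3$-cycle, a $4$-cycle, and a length-$4$ path all have equal length and hence are forbidden in an LFS, deduce that every component of $\mathcal{G}_p(S)$ is a tree of diameter at most~$3$, and conclude via Theorem~\ref{thm:UFS characterization for BS with atom-size at most 2}. Your write-up is somewhat more explicit than the paper's in two places---you spell out why singleton quarks are isolated and you justify the graph-theoretic step by reducing longer cycles and paths to a length-$4$ path---but the argument is the same.
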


\begin{proof}
	It suffices to argue that each Boolean sublattice whose quarks have size at most~$2$ that is an LFS is also a UFS. Let $S$ be a Boolean sublattice whose quarks have size at most~$2$, and assume that~$S$ is an LFS. By virtue of Proposition~\ref{prop:isolated atoms LFL}, we can assume that every quark of $S$ has size~$2$. 
	Mimicking the proof of Lemma~\ref{lem:size-two atoms LF}, we can arrive to the conclusion that $\mathcal{G}_p(S)$ does not contain length-$3$ cycles, length-$4$ cycles, and length-$4$ paths (this is because the proof of Lemma~\ref{lem:size-two atoms LF} is based on the existence of distinct factorizations with same length). Hence each connected component of $\mathcal{G}_p(S)$ must be a tree of diameter at most~$3$. Now the argument given in the last paragraph of the proof of Theorem~\ref{thm:UFS characterization for BS with atom-size at most 2} shows that $S$ is a UFS.
\end{proof}

Let us conclude comparing the property of being length-factorial with that of having finite factorizations in the context of join semilattices. As an immediate consequence of Remark~\ref{rem:atomic = FF in Boolean sublattices}, we obtain that every Boolean sublattice that is an LFS is also an FFS (as mentioned at the beginning of this section, the corresponding statement holds in the class of cancellative commutative monoids). Observe, on the other hand, that each non-LFS factorizable Boolean sublattice is an example of an FFS that is not an LFS. However, there are join semilattices that are LFSs but not FFSs. The following example confirms our assertion.

\begin{example} \label{ex:LFL not FFL}
	For $i,j \in \nn$ with $i \le j$, we define the function $\beta_{i,j} \colon [0,1) \to \nn_0$ as follows: given $x \in [0,1)$, take the sequence of digits from the $i$-th position to the $j$-th position (both included) after the decimal point of the binary representation of~$x$, and then let $\beta_{i,j}(x)$ be the positive integer represented by the obtained binary string. For example,
	\begin{itemize}
		\item $\beta_{4,6}(1/3) = \beta_{4,6}(0.010101 \ldots_2) = 101_2 = 5$,
		\smallskip
		
		\item $\beta_{2,5}(1/4) = \beta_{2,5}(0.01000_2) = 1000_2 = 8$, and
		\smallskip
		
		\item $\beta_{2,2}(1/4) = \beta_{2,2}(0.01000_2) = 1_2 = 1$.
	\end{itemize}
	For each $n \in \nn$, let $T_n$ denote the $n$-th triangular number $ \frac{n(n+1)}{2}$. Since $T_n - T_{n-1} = n$, the image of the function $\beta_{T_{n-1} + 1, T_n}$ is $\ldb 0, 2^n-1 \rdb$. Now, for $n \in \nn$ and $k \in \ldb 0, 2^n-1 \rdb$, let $A^n_k$ be the preimage of~$k$ by $\beta_{T_{n-1}+1, T_n}$; that is,
	\[
		A^n_k = \big\{ x \in [0,1) \mid \beta_{T_{n-1}+1,T_n}(x) = k \big\}.
	\]
	For example, as $T_2 + 1 = 4$ and $T_3 = 6$, the fact that $\beta_{4,6}(1/3) = 5$ implies that $1/3 \in A^3_5$. For each~$n \in \nn$, it is clear that $\{ A^n_k \mid k \in \ldb0,2^n-1 \rdb \}$ is a partition of $[0,1)$, which we call in the scope of this example the $n$-\emph{th layer}. Now, we let $S$ denote the subsemilattice of $2^{[0,1)}$ (ordered under set inclusion) that is generated by all the sets $A^n_k$, where $n \in \nn$ and $k \in \ldb 0,2^n-1 \rdb$; that is,
	\[
		S = \big\langle A^n_k \mid n \in \nn \text{ and } k \in \ldb 0, 2^n-1 \rdb \big\rangle.
	\]
	Since every layer is a (finite) partition of $[0,1)$, it follows that $[0,1) \in S$. We will prove that $S$ is an LFS that is not an FFS. To accomplish this, it is enough to argue that $S$ is a factorizable join semilattice satisfying that $[0,1)$ has infinitely many factorizations, each of them having a different length, and also that each of the remaining elements of $S$ has a unique factorization.
	\smallskip
	
	\noindent \textit{Claim 1.} $S$ is factorizable with $\mathcal{A}(S) = \{ A^n_k \mid n \in \nn \text{ and } k \in \ldb 0, 2^n-1 \rdb \}$.
	\smallskip
	
	\noindent \textit{Proof of Claim 1.} It suffices to show that there are no inclusion relations between members in the defining generating set $\{ A^n_k \mid n \in \nn \text{ and } k \in \ldb 0, 2^n-1 \rdb \}$. Consider the distinct blocks $A^m_i$ and $A^n_j$, where $m,n \in \nn$ and $(i,j) \in \ldb 0, 2^m-1 \rdb \times \ldb 0, 2^n - 1 \rdb$. As distinct blocks in the same layer are disjoint, we can assume that $m \neq n$. As $m \neq n$, the intervals $\ldb T_{m-1}+1, T_m \rdb$ and $\ldb T_{n-1}+1, T_n \rdb$ are disjoint, which implies that the functions $\beta_{T_{m-1}+1, T_m}$ and $\beta_{T_{n-1}+1, T_n}$ depend on disjoint ranges of digits, which in turn implies that both sets $A^m_i \setminus A^n_j$ and $A^n_j \setminus A^m_i$ are nonempty. Thus, none of the sets $A^m_i$ and $A^n_j$ is included in the other one, and so Claim~1 follows.
	\smallskip
	
	Since each layer is a partition of $[0,1)$, it follows as an immediate consequence of Claim~1 that the $n$-th layer is a factorizable of $[0,1)$ of length $2^n$. In particular, $S$ is not an FFS. To argue that $S$ is an LFS we use the following technical claim.
	\smallskip
	
	\noindent \textit{Claim 2.} The following statements hold.
	\begin{enumerate}
		\item If $X \in S \setminus \{[0,1)\}$, then for each $n \in \nn$ and $k \in \ldb 0, 2^n - 1 \rdb$, the inclusion $A^n_k \subseteq X$ implies that $A^n_k \in z$ for any $z \in \mathsf{Z}(X)$.
		\smallskip
		
		\item $\mathsf{Z}([0,1)) = \big\{ \{ A^n_k \mid k \in \ldb 0, 2^n-1\rdb \} \mid n \in \nn \big\}$.
	\end{enumerate}
	\smallskip
	
	\noindent \textit{Proof of Claim 2.} (1) Suppose that $A^n_k \subseteq X$ for some $n \in \nn$ and $k \in \ldb 0, 2^n-1 \rdb$, and assume, by way of contradiction, that $A^n_k \notin z$. As $X \neq [0,1)$, for each $m \in \nn$, there exists $k_m \in \ldb 0, 2^m - 1 \rdb$ such that $A^m_{k_m} \notin z$. After replacing $A^n_{k_n}$ by $A^n_k$, we can assume that $k_n = k$. Let $x^* \in [0,1)$ be the real number such that $\beta_{T_{m-1}+1, T_m}(x^*) = k_m$ for every $m \in \nn$ (there is a unique $x^* \in [0,1)$ satisfying this condition due to the fact that $\{\ldb T_{m-1}+1, T_m \rdb \mid m \in \nn \}$ is a partition of $\nn$). Now fix $A^\ell_j \in z$. As $A^\ell_{k_\ell} \notin z$, the fact that $A^\ell_j$ and $A^\ell_{k_\ell}$ are blocks in the same layer ensures that $A^\ell_j \cap A^\ell_{k_\ell}$ is empty. Thus, the fact that $x^* \in A^\ell_{k_\ell}$ implies that $x^* \notin A^\ell_j$. Hence $x^* \notin A$ for any $A \in z$, and we conclude that $x^* \notin \vee z = X$. However, as $A^n_k \subseteq X$, the previous conclusion contradicts that $\beta_{T_{n-1}+1, T_n}(x^*) = k$, and so part~(1) of Claim~2 follows.
	\smallskip
	
	(2) If for some $n \in \nn$, a factorization $z \in \mathsf{Z}([0,1))$ contains all the blocks of the $n$-th layer, then it is clear that $z = \{A^n_k \mid k \in \ldb 0, 2^n-1 \rdb \}$. Suppose, by way of contradiction, that $z \in \mathsf{Z}([0,1))$ does not contain all the blocks of any layer. Then, as in the previous part, for each $m \in \nn$, there exists $k_m \in \ldb 0, 2^m - 1 \rdb$ such that $A^m_{k_m} \notin z$, and we can define $x^* \in [0,1)$ similarly to arrive to the desired contradiction.
	\smallskip
	
	By virtue of part~(2) of Claim~2, any two distinct factorizations of $[0,1)$ in~$S$ have different lengths. Thus, proving that $S$ is an LFS amounts to verifying that each element $X \in S \setminus \{\emptyset, [0, 1) \}$ has a unique factorization in~$S$. Fix $z \in \mathsf{Z}(X)$. If $z' \in \mathsf{Z}(X)$ and $A \in z'$, then $A \subseteq X$, and so it follows from part~(1) of Claim~2 that $A \in z$. Therefore $z' \subseteq z$, which implies that $z' = z$. Hence $X$ has a unique factorization. As a consequence, we can now conclude that~$S$ is an LFS that is not an FFS.
\end{example}

\bigskip
\section*{Acknowledgments}

While working on this paper, the second author was kindly supported by the NSF awards DMS-1903069 and DMS-2213323.

\bigskip

\end{document}